\documentclass[twoside,leqno,10pt, A4]{amsart}
\usepackage{amsfonts}
\usepackage{amsmath}
\usepackage{amscd}
\usepackage{amssymb}
\usepackage{amsthm}
\usepackage{amsrefs}
\usepackage{latexsym}
\usepackage{mathrsfs}
\usepackage{bbm}
\usepackage{enumerate}
\usepackage{graphicx}

\usepackage{amsfonts}
\usepackage{amsmath}
\usepackage{amscd}
\usepackage{amssymb}
\usepackage{amsthm}
\usepackage{amsrefs}
\usepackage{latexsym}
\usepackage{mathrsfs}
\usepackage{bbm}
\usepackage{amscd}
\usepackage{amssymb}
\usepackage{amsthm}
\usepackage{amsrefs}
\usepackage{latexsym}
\usepackage{mathrsfs}
\usepackage{bbm}
\usepackage{enumerate}
\usepackage{graphicx}
\usepackage{color}
\setlength{\textwidth}{18.2cm}
\setlength{\oddsidemargin}{-0.7cm}
\setlength{\evensidemargin}{-0.7cm}
\setlength{\topmargin}{-0.7cm}
\setlength{\headheight}{0cm}
\setlength{\headsep}{0.5cm}
\setlength{\topskip}{0cm}
\setlength{\textheight}{23.9cm}
\setlength{\footskip}{.5cm}

\begin{document}

\newtheorem{theorem}[subsection]{Theorem}
\newtheorem{proposition}[subsection]{Proposition}
\newtheorem{lemma}[subsection]{Lemma}
\newtheorem{corollary}[subsection]{Corollary}
\newtheorem{conjecture}[subsection]{Conjecture}
\newtheorem{prop}[subsection]{Proposition}
\numberwithin{equation}{section}
\newcommand{\mr}{\ensuremath{\mathbb R}}
\newcommand{\mc}{\ensuremath{\mathbb C}}
\newcommand{\dif}{\mathrm{d}}
\newcommand{\intz}{\mathbb{Z}}
\newcommand{\ratq}{\mathbb{Q}}
\newcommand{\natn}{\mathbb{N}}
\newcommand{\comc}{\mathbb{C}}
\newcommand{\rear}{\mathbb{R}}
\newcommand{\prip}{\mathbb{P}}
\newcommand{\uph}{\mathbb{H}}
\newcommand{\fief}{\mathbb{F}}
\newcommand{\majorarc}{\mathfrak{M}}
\newcommand{\minorarc}{\mathfrak{m}}
\newcommand{\sings}{\mathfrak{S}}
\newcommand{\fA}{\ensuremath{\mathfrak A}}
\newcommand{\mn}{\ensuremath{\mathbb N}}
\newcommand{\mq}{\ensuremath{\mathbb Q}}
\newcommand{\half}{\tfrac{1}{2}}
\newcommand{\f}{f\times \chi}
\newcommand{\summ}{\mathop{{\sum}^{\star}}}
\newcommand{\chiq}{\chi \bmod q}
\newcommand{\chidb}{\chi \bmod db}
\newcommand{\chid}{\chi \bmod d}
\newcommand{\sym}{\text{sym}^2}
\newcommand{\hhalf}{\tfrac{1}{2}}
\newcommand{\sumstar}{\sideset{}{^*}\sum}
\newcommand{\sumprime}{\sideset{}{'}\sum}
\newcommand{\sumprimeprime}{\sideset{}{''}\sum}
\newcommand{\sumflat}{\sideset{}{^\flat}\sum}
\newcommand{\shortmod}{\ensuremath{\negthickspace \negthickspace \negthickspace \pmod}}
\newcommand{\V}{V\left(\frac{nm}{q^2}\right)}
\newcommand{\sumi}{\mathop{{\sum}^{\dagger}}}
\newcommand{\mz}{\ensuremath{\mathbb Z}}
\newcommand{\leg}[2]{\left(\frac{#1}{#2}\right)}
\newcommand{\muK}{\mu_{\omega}}
\newcommand{\thalf}{\tfrac12}
\newcommand{\lp}{\left(}
\newcommand{\rp}{\right)}
\newcommand{\Lam}{\Lambda_{[i]}}
\newcommand{\lam}{\lambda}
\def\L{\fracwithdelims}
\def\om{\omega}
\def\pbar{\overline{\psi}}
\def\phi{\varphi}
\def\lam{\lambda}
\def\lbar{\overline{\lambda}}
\newcommand\Sum{\Cal S}
\def\Lam{\Lambda}
\newcommand{\sumtt}{\underset{(d,2)=1}{{\sum}^*}}
\newcommand{\sumt}{\underset{(d,2)=1}{\sum \nolimits^{*}} \widetilde w\left( \frac dX \right) }
\renewcommand{\a}{\alpha}
\newcommand{\Res}{\mathop{\operatorname{Res}}}
\newcommand{\GL}{\operatorname{GL}}
\newcommand{\SL}{\operatorname{SL}}

\newcommand{\af}{\mathfrak{a}}

\theoremstyle{plain}
\newtheorem{conj}{Conjecture}
\newtheorem{remark}[subsection]{Remark}

\makeatletter
\def\widebreve{\mathpalette\wide@breve}
\def\wide@breve#1#2{\sbox\z@{$#1#2$}%
     \mathop{\vbox{\m@th\ialign{##\crcr
\kern0.08em\brevefill#1{0.8\wd\z@}\crcr\noalign{\nointerlineskip}%
                    $\hss#1#2\hss$\crcr}}}\limits}
\def\brevefill#1#2{$\m@th\sbox\tw@{$#1($}%
  \hss\resizebox{#2}{\wd\tw@}{\rotatebox[origin=c]{90}{\upshape(}}\hss$}
\makeatletter

\title[Twisted moment of $L$-functions]{Twisted first moment of quadratic and quadratic twist $L$-functions}

\author[P. Gao]{Peng Gao}
\address{School of Mathematical Sciences, Beihang University, Beijing 100191, China}
\email{penggao@buaa.edu.cn}

\author[L. Zhao]{Liangyi Zhao}
\address{School of Mathematics and Statistics, University of New South Wales, Sydney NSW 2052, Australia}
\email{l.zhao@unsw.edu.au}

\begin{abstract}
 We evaluate the twisted first moment of central values of the product of a quadratic Dirichlet $L$-function and a quadratic twist of a modular $L$-function.
\end{abstract}

\maketitle

\noindent {\bf Mathematics Subject Classification (2010)}: 11M06, 11N37, 11F11, 11F66  \newline

\noindent {\bf Keywords}: central values, non-vanishing, first moment, quadratic Dirichlet $L$-function, modular $L$-function

\section{Introduction}
\label{sec 1}

Whether the central values of $L$-functions equal zero or not is an issue deserving and receiving much attention in the literature due to its connection with deep arithmetic problems such as the Birch and Swinnerton-Dyer conjecture.  A conjecture of S. Chowla \cite{chow} asserts that $L(1/2, \chi) \neq 0$ for every primitive Dirichlet character $\chi$. In \cite{Jutila}, M. Jutila evaluated the first and second moments of central values of the family of quadratic Dirichlet $L$-functions to show that there is an infinitude of such $L$-functions with non-vanishing central values. In \cite{sound1}, K. Soundararajan further computed the first and second mollified moments of the family of quadratic Dirichlet $L$-functions and proved that at least $87.5\%$ of these $L$-functions have non-vanishing central values. \newline

  Also using the mollifier method, B. Balasubramanian and V. K. Murty  \cite{BM} established that a small positive proportion of the family of primitive Dirichlet $L$-functions of a fixed prime modulus $q$ does not vanish result at the central point.  This proportion was subsequently improved to $1/3$ by H. Iwaniec and P. Sarnak \cite{I&S} and to $34.11\%$ by H. M. Bui \cite{Bui}, both for a general $q$.  When restricting to primes $q$, the percentage of this non-vanishing moiety was further improved to $3/8=37.5 \%$ by R. Khan and H. T. Ngo \cite{KN} and to $5/13=38.46... \%$ by R. Khan, D. Mili\'{c}evi\'{c} and H. T. Ngo \cite{KMN}. \newline

  In \cite{D&K15}, S. Das and R. Khan studied the simultaneous non-vanishing of Dirichlet $L$-functions and twists of Hecke–Maass $L$-functions of the modular group $SL_2 (\mathbb{Z})$ to show that given a Hecke-Maass cusp form $f$ of $SL_2 (\mathbb{Z})$.  They proved that there exists a primitive Dirichlet character $\chi$ modulo $q$ such that $L(1/2, f \otimes \chi)L(1/2, \overline{\chi}) \neq 0$ for any sufficiently large prime $q$.  This result was obtrained by evaluating asymptotically the average of $L(1/2, f \otimes \chi)L(1/2, \overline{\chi})$ over primitive Dirichlet characters $\chi$ modulo $q$. Further generalizations of the above result to points other than $s=1/2$ or to composite moduli $q$ can be found \cite{Sono19} and  \cite{Sun22}. \newline

Instead of studying $L$-functions twisted by characters to a fixed modulus, one may also study $L$-functions twisted by characters to a fixed order. In \cite{Maiti}, G. Maiti established a simultaneous non-vanishing result for quadratic Dirichlet $L$-functions and quadratic twists of modular $L$-functions. A key ingredient in his proof is an asymptotical formula on the first moment of central values of the product of a quadratic Dirichlet $L$-function and a quadratic twist of a modular $L$-function.  Note that rather than evaluating the moments of a given family of $L$-functions, there is currently much interest in the twisted moments of the corresponding family of $L$-functions, as the twisted moments are needed in applications when the mollifier method is used. For instance, one may apply the first and second mollified moment (when available) to achieve a positive portion simultaneous non-vanishing result concerning central values of the family of $L$-function under consideration. \newline

 It is the aim of this paper to evaluate the twisted  first moment of central values of the product of a quadratic Dirichlet $L$-function and a quadratic twist of a modular $L$-function. To state our results, we fix a holomorphic Hecke eigenform $f$ of weight $\kappa $ of $SL_2 (\mathbb{Z})$.   The Fourier expansion of $f$ at infinity can be written as
\begin{align}
\label{fFourier}
f(z) = \sum_{n=1}^{\infty} \lambda_f (n) n^{(\kappa -1)/2} e(nz), \quad \mbox{where} \quad e(z)= \exp(2 \pi i z) .
\end{align}
  Writing $\chi_d=\leg {d}{\cdot} $ for the Kronecker symbol, the twisted modular $L$-function $L(s, f \otimes \chi_d)$ is defined, for $\Re(s)>1$, by
\begin{align*}
L(s, f \otimes \chi_d) &= \sum_{n=1}^{\infty} \frac{\lambda_f(n)\chi_d(n)}{n^s}
 = \prod_{p\nmid d} \left(1 - \frac{\lambda_f (p) \chi_d(p)}{p^s}  + \frac{1}{p^{2s}}\right)^{-1}.
\end{align*}
The function $L(s, f \otimes \chi_d)$ has an analytical continuation to the entire complex plane and satisfies the functional equation (see, for example, \cite{S&Y})
\begin{align}
\label{equ:FEfd}
\Lambda (s, f \otimes \chi_d) := \left(\frac{|d|}{2\pi} \right)^s \Gamma \left( s + \frac{\kappa -1}{2} \right) L(s, f \otimes \chi_d)
= i^\kappa \epsilon(d ) \Lambda (1- s, f \otimes \chi_d),
\end{align}
where $\epsilon(d) = 1$ if $d >0$ and $\epsilon(d) = -1$ if $d<0$. \newline

Note that when $d$ is positive, odd and square-free, the quadratic character $\chi_{8d}$ is primitive with conductor $8d$ such that $\chi_{8d}(-1) = 1$. Moreover, when $\kappa \equiv 0 \pmod 4$, both of the signs of the functional equation for $L(s, f \otimes \chi_{8d})$ given in \eqref{equ:FEfd} and for $L(s, \chi_{8d})$ given in \eqref{fneqnquad} below equal to $1$.  For such $d$ and $\kappa$, we are interested in obtaining an asymptotical formula for the twisted first moment of the family $\{L(1/2, \chi_{8d})L(1/2, f \otimes \chi_{8d})\}$.  For this, we recall that the Mellin transform $\widehat f(s)$ for any function $f$ and any $s \in \mc$ is defined by
\begin{equation*}
\widehat f (s) = \int\limits_{0}^{\infty} f(x)x^{s}\frac {\dif x}{x}.
\end{equation*}

With $\zeta(s)$ denoting the Riemann zeta function, ${\mr}^+=(0,\infty)$ and $\sum^{\star}$ for the sum over square-free integers throughout, we have the following result for the twisted first moment of central values of the family of $L$-functions under our consideration.
\begin{theorem}
\label{theo:1stmoment}
 With the notation as above and $\kappa \equiv 0 \pmod 4$, let $\Phi:\mr^{+} \rightarrow \mr$ be a smooth function of compact support. For any positive odd integer $l$ such that $l=l_1l^2_2$ with $l_1$ square-free, suppose that for any $p|l_1$,
\begin{align}
\label{lcondition}
  1+\lambda_f(p)+\frac 1p \neq 0.
\end{align}
  Then we have for any $\varepsilon>0$,
\begin{align}
\label{eq:1stmoment}
\begin{split}
 \sumstar_{(d,2)=1} & L(\tfrac{1}{2}, \chi_{8d})L(\tfrac{1}{2},f \otimes \chi_{8d})\chi_{8d}(l)\Phi \left( \frac d{X} \right) \\
=&  \frac{ \widehat{\Phi}(1)X}{3\zeta(2)\sqrt{l_1}}\mathcal H(1)\prod_{p|l}Q_p(1;l)\left (\log\frac {X}{l_1}-\sum_{p \mid l_1}\log p+C+2\sum_{\substack{p|l_1 }}\frac {\lambda_f(p)\log p}{1+\lambda_f(p)+1/p}+\sum_{\substack{p| l}} \frac {D(p)\log p}{p} \right ) \\
& \hspace*{1in} +O(l^{1/4 + \varepsilon} X^{7/8 + \varepsilon}),
\end{split}
\end{align}
  where the functions $\mathcal H(s)$ and $Q_p(s;l)$ are given in Lemma \ref{lemma:DS4P} below. Moreover, $C$ is a constant whose value dependd on $\Phi(1)$ and $\Phi'(1)$ only and $D(p) \ll 1$ for all $p$.
\end{theorem}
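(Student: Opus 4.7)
The plan is the standard moment method. Since $\kappa\equiv 0\pmod 4$ and $d>0$ is odd and square-free, the functional equation \eqref{equ:FEfd} for $L(s, f\otimes\chi_{8d})$ and the analogous one for $L(s,\chi_{8d})$ both carry sign $+1$, so their product is self-dual with sign $+1$. A standard approximate functional equation then yields
\begin{equation*}
L(\tfrac12,\chi_{8d})L(\tfrac12, f\otimes\chi_{8d}) \;=\; 2\sum_{n,m\ge 1}\frac{\lambda_f(m)\chi_{8d}(nm)}{\sqrt{nm}}\,W_d(nm),
\end{equation*}
where the smooth weight $W_d(y)$, built from Mellin transforms of the relevant gamma factors, decays rapidly for $y\gg d^{3/2+\varepsilon}$. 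Inserting this on the left-hand side of \eqref{eq:1stmoment} and interchanging summations reduces the problem to evaluating the inner character sum $\sumstar_{(d,2)=1}\chi_{8d}(lnm)\Phi(d/X)$, to which one applies the Poisson summation formula for quadratic characters of modulus $8d$ in the form used by Soundararajan \cite{sound1}.

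Poisson splits the sum into a zero-frequency main term, supported on those $(n,m)$ for which $lnm$ is a perfect square, plus a dual sum indexed by a non-zero integer $k$ and containing a quadratic Gauss sum. Because $l=l_1l_2^2$ with $l_1$ square-free, the square condition is equivalent to $nm = l_1 r^2$. The resulting arithmetic Dirichlet series
\[
A(s)\;=\;\sum_{\substack{n,m\ge 1\\ nm=l_1 r^2}}\frac{\lambda_f(m)}{(nm)^s}
\]
is multiplicative and admits an Euler product whose local factors are precisely those encoded by $\mathcal H(s)$ and $Q_p(s;l)$ in Lemma \ref{lemma:DS4P}. Representing the main term as a contour integral via Mellin inversion (with $\Phi$ contributing $\widehat\Phi(s)$), one obtains an integrand with a double pole at $s=1/2$ arising from the $\zeta(2s)$-type factor inside $\mathcal H(s)$. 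Shifting the contour past this pole and computing the residue produces the leading $X\log(X/l_1)$ term together with all lower-order corrections in \eqref{eq:1stmoment}: the shift $-\sum_{p\mid l_1}\log p$ from differentiating $l_1^{-s}$; the sum $\sum_{p\mid l_1}\tfrac{\lambda_f(p)\log p}{1+\lambda_f(p)+1/p}$ from logarithmic derivatives, at the primes $p\mid l_1$, of the local factors of $\mathcal H(s)$, which are finite exactly by hypothesis \eqref{lcondition}; the $\sum_{p\mid l}\tfrac{D(p)\log p}{p}$ terms from logarithmic derivatives of $Q_p(s;l)$; and the constant $C$ from $\widehat\Phi(1)$, $\widehat\Phi'(1)$, and fixed constants arising from the gamma factors at $s=1/2$.

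The dual sum produces the error term. The decay of $W_d$ truncates the range to $nm\ll X^{3/2+\varepsilon}$, and the quadratic Gauss sum is of size at most $\sqrt{8d}$. Averaging $|\lambda_f(m)|$ via Cauchy--Schwarz and the Rankin--Selberg bound $\sum_{m\le Y}|\lambda_f(m)|^2\ll Y$, and exploiting cancellation in the double sum over $k$ and $d$ via a further Poisson step on $d$ together with the Weil bound for the ensuing Kloosterman-type sums, one arrives at $O(l^{1/4+\varepsilon}X^{7/8+\varepsilon})$. The main obstacle lies in this error analysis: achieving the exponent $7/8$ uniformly in $l$ requires careful balancing of the truncation scale against the cancellation in the dual sum, and obtaining only $l^{1/4}$ (rather than a larger power of $l$) demands exploiting the factorization structure of the Gauss sum modulo $l$ rather than estimating it trivially.
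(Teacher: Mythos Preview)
Your outline has a genuine gap: you treat the entire dual sum (the $k\neq 0$ part of Poisson) as an error term, but in this problem it contributes a \emph{main term of the same order} as the zero-frequency term, and the stated answer only emerges after the two are combined. Concretely, the $k=0$ contribution produces a residue with leading piece $\tfrac{3}{2}\,X\log X$ (from the $X^{3s/2}$ in the approximate functional equation), while the $k\neq 0$ sum, after Mellin inversion and writing $k=\iota k_1k_2^2$, contains a pole of $L(1+u-s/2,\chi_{\iota k_1})$ at $u=s/2$ for $\iota=k_1=1$; this residue supplies a second double pole at $s=0$ with leading piece $-\tfrac{1}{2}\,X\log X$. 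Only the sum is $X\log X$. The same cancellation happens in the $l$-dependent lower-order terms: the zero-frequency residue gives $-2\sum_{p\mid l_1}\tfrac{(\log p)/p}{1+\lambda_f(p)+1/p}$, the secondary residue gives $+2\sum_{p\mid l_1}\tfrac{(\lambda_f(p)+1/p)\log p}{1+\lambda_f(p)+1/p}$, and their sum is the $2\sum_{p\mid l_1}\tfrac{\lambda_f(p)\log p}{1+\lambda_f(p)+1/p}$ in \eqref{eq:1stmoment}. So your attribution of all these pieces to the zero-frequency term is incorrect, and an argument that bounds the whole dual sum by $O(l^{1/4+\varepsilon}X^{7/8+\varepsilon})$ would be false.

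Your error analysis is also off track. After Poisson over $d$ there is no $d$-sum left on which to do ``a further Poisson step'', and no Kloosterman sums or Weil bounds enter. The paper instead (i) removes the square-free condition by M\"obius with a truncation parameter $Z$, bounding the tail $a>Z$ by the quadratic large sieve to get $X^{1+\varepsilon}/Z$; (ii) for $k\neq 0$, writes the $(k_2,n)$-sum as a Dirichlet series that factors through $\zeta(v)L(\tfrac12+w,\chi_{\iota k_1})L(\tfrac12+w,f\otimes\chi_{\iota k_1})$ times an Euler product, shifts contours to extract the secondary main term, and bounds the remaining $k_1$-sum again via the large sieve for quadratic twists, obtaining $l^{1/2+\varepsilon}ZX^{3/4+\varepsilon}$; (iii) balances these by choosing $Z=X^{1/8}l^{-1/4}$, which is where the exponents $7/8$ and $1/4$ actually come from. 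None of this is visible in your sketch.
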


We point out here that as already mentioned in \cite{HH22_1}, the family of $L$-functions $\{L(1/2, \chi_{8d})L(1/2, f \otimes \chi_{8d})\}$ is one of $\GL(3)$ $L$-functions, so that its first moment is comparable to the third moment of the family of quadratic Dirichlet $L$-functions studied by K. Soundarajan in \cite{sound1} and the first moment of a family of quadratic twists of $\GL(3)$ $L$-functions studied by S. Hua and B. Huang \cite{HH22_1}. \newline

  Our proof of Theorem \ref{theo:1stmoment} follows closely the methods of K. Soundararajan developed in \cite{sound1}, making also use of some treatments of M. P. Young \cite{Young2}, who gave an improvement on the error term for the smoothed third moment of quadratic Dirichlet $L$-functions. We begin with the approximate functional equation for $L(\frac{1}{2}, \chi_{8d})L(\frac{1}{2},f \otimes \chi_{8d})$ given in Lemma \ref{lem:AFE} and apply the Poisson summation formula given in Lemma \ref{lem2} to evaluate the resulting expression. In the process, we are able to identify two main terms, similar to the case of third moment of Dirichlet $L$-functions in \cite{sound1} and the case of first moment of quadratic twists of $\GL(3)$ $L$-functions in \cite{HH22_1}.  However, a salient contrast between ours and these previous results is that the two mains terms in our case are of the same size while the secondary main terms are of slightly smaller order of magnitude by some powers of $\log X$ in the earlier works \cites{sound1, HH22_1}.  This requires a more careful combination of the two main terms in our case in order to obtain the expression in \eqref{eq:1stmoment}.  We also note that as the result in Theorem \ref{theo:1stmoment} depends on the condition \eqref{lcondition}, it may not be feasible to apply the recursive method of Young developed in \cites{Young1, Young2} to ameliorate  the error term in \eqref{eq:1stmoment}. \newline

  Note that the condition \eqref{lcondition} is vacuous if $l=1$. We therefore readily recover from Theorem \ref{theo:1stmoment} the result \cite[Theorem 1.1]{Maiti} of G. Maiti on the first moment of the family of $L$-functions under consideration. With the notation as above, we have for any $\varepsilon>0$, 
\begin{align}
\label{1stmoment}
 & \sumstar_{(d,2)=1} L(\tfrac{1}{2}, \chi_{8d})L(\tfrac{1}{2},f \otimes \chi_{8d})\Phi \left( \frac d{X} \right) =  \frac{ \widehat{\Phi}(1)\mathcal H(1) X}{3\zeta(2)}\left (\log X+C \right ) +O(X^{7/8 + \varepsilon}).
\end{align}

    We point out that one may apply the result obtained in Theorem \ref{theo:1stmoment} to further achieve  sharp bounds for the $k$-th moment of $|L(\frac{1}{2}, \chi_{8d})L(\frac{1}{2},f \otimes \chi_{8d})|$ using the upper bounds principle of M. Radziwi{\l\l} and K. Soundararajan \cite{Radziwill&Sound} as well as the lower bounds principle of  W. Heap and K. Soundararajan \cite{H&Sound}. However, we notice that the asymptotic formula for the twisted first moment in \eqref{eq:1stmoment} requires the non-vanishing condition \eqref{lcondition}. In view of the Sato--Tate conjecture for modular forms (see \cite[p. 493]{iwakow}), the values of $\lambda_f(p)$ are expected to be uniformly distributed in the interval $(-2,2)$, so that one cannot rule out the possibility that the left-hand side expression in \eqref{lcondition} is close to $0$.  This, in turn, makes it challenging to combine the twisted moment result with the upper and lower bounds principles mentioned above to establish sharp bounds for the moments of  $|L(\frac{1}{2}, \chi_{8d})L(\frac{1}{2},f \otimes \chi_{8d})|$. Nevertheless, a method of K. Soundararajan \cite{Sound2009} with its refinement by A. J. Harper \cite{Harper} provides a conditional approach towards establishing upper bounds for moments of $L$-functions under the generalized Riemann hypothesis (GRH). For families of quadratic twists of $L$-functions, such sharp upper bounds are obtained by K. Sono \cite{Sono16}. It follows from \cite[Theorem 2.1]{Sono16} that for any real number $k \geq 0$, we have under GRH,
\begin{align*}
   \sumstar_{(d,2)=1} |L(\tfrac{1}{2}, \chi_{8d})L(\tfrac{1}{2},f \otimes \chi_{8d})|^{2k} \Phi \left( \frac d{X} \right)  \ll_k X(\log X)^{(2k)^2}.
\end{align*}

   One may apply the above estimation together with the first moment result given in \eqref{1stmoment} to deduce a simultaneously non-vanishing result concerning $L(1/2, \chi_{8d})$ and $L(1/2,f \otimes \chi_{8d})$, using the Cauchy-Schwarz inequality. However, such a result is inferior to what one can obtain by studying the one-level density of low-lying zeros of the family of quadratic twists of $L$-functions (see for example \cite{G&Zhao12}), in which case one may deduce a positive portion non-vanishing result for $\{L(1/2, \chi_{8d})L(1/2,f \otimes \chi_{8d})\}$  under GRH. In fact, noting that $L(1/2, \chi_{8d})L(1/2,f \otimes \chi_{8d})$ is a $\GL(3)$ $L$-function, so that the one-level density result given in \cite[(1.5)]{G&Zhao12} of low-lying zeros for the family $\{L(1/2, \chi_{8d})L(1/2,f \otimes \chi_{8d})\}$  holds for any test function $\phi$ whose Fourier transform is supported on $(-2/3, 2/3)$ under GRH. This implies that one can take $v=2/3$ in \cite[(1.46)]{ILS} and therefore deduce from  \cite[(1.40)]{ILS} that at least $50\%$ of the members of the family of $\{L(1/2, \chi_{8d})L(1/2,f \otimes \chi_{8d})\}$  do not vanish under GRH. On the other hand, one checks that the above procedure does not yield a positive portion non-vanishing result unconditionally since the currently best known unconditional one-level density result given in \cite[Theorem 3.2]{Ru} is only valid for any test function $\phi$ whose Fourier transform is supported on $(-1/3, 1/3)$ for our setting. \newline

   Now, to derive a simultaneously non-vanishing result concerning $L(1/2, \chi_{8d})$ and $L(1/2,f \otimes \chi_{8d})$ unconditionally, we apply H\"older's inequality and get that
\begin{align}
\label{upperbound1}
\begin{split}
\sumstar_{(d,2)=1} & L(\tfrac{1}{2}, \chi_{8d})L(\tfrac{1}{2},f \otimes \chi_{8d})\Phi\left( \frac d{X} \right) \\
 \leq & \Big( \sumstar_{\substack{ (d,2)=1 \\ L(1/2, \chi_{8d})L(1/2, f \otimes \chi_{8d}) \neq 0}} \Phi \Big( \frac d{X} \Big) \Big)^{1/4} \Big( \sumstar_{(d,2)=1} L(\tfrac{1}{2}, \chi_{8d})^4\Phi \Big( \frac d{X} \Big) \Big)^{1/4} \Big( \sumstar_{(d,2)=1} L(\tfrac{1}{2},f \otimes \chi_{8d}) ^2\Phi \Big( \frac d{X} \Big) \Big)^{1/2}.
\end{split}
\end{align}

  We note that \cite[Theorem 1.1]{Li22} implies that
\begin{align} \label{eq:4thmoment}
    \sumstar_{\substack{(d,2)=1}} L(\tfrac{1}{2}, f \otimes \chi_{8d})^{2}\Phi \Big( \frac d{X} \Big)  \sim & X \log X.
\end{align}

  Moreover, it follows from \cite[Theorem 1]{DRHB} that
\begin{align}
\label{eq:2ndmomentest}
   \sumstar_{\substack{(d,2)=1}} L(\tfrac{1}{2}, \chi_{8d})^{4}\Phi \Big(\frac d{X} \Big)  \ll &  X^{1+\varepsilon}.
\end{align}

  We deduce readily from \eqref{upperbound1}--\eqref{eq:2ndmomentest} the following simultaneously non-vanishing result concerning $L(1/2, \chi_{8d})$ and $L(1/2,f \otimes \chi_{8d})$.
\begin{theorem}
\label{coro:nonvanish}
With the notation as above, $d$ odd, square-free and $\kappa \equiv 0 \pmod 4$, there exists an infinitude of quadratic Dirichlet characters $\chi_{8d}$ such that neither $L(1/2, \chi_{8d})$ nor $L(1/2, f \otimes \chi_{8d})$ is zero.  More precisely, the number of such characters with $d \leq X$ is $\gg X^{1-\varepsilon}$ for any $\varepsilon>0$.
\end{theorem}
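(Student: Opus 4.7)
The proof is the short H\"older argument sketched in the paragraphs immediately preceding the theorem; I would simply make each step explicit and track the constants. Write
$$N(X) := \sumstar_{\substack{(d,2)=1 \\ L(1/2,\chi_{8d})L(1/2,f\otimes\chi_{8d})\neq 0}} \Phi\left(\frac{d}{X}\right),$$
so that the theorem amounts to proving $N(X)\gg X^{1-\varepsilon}$. To this end, I would fix once and for all a smooth nonnegative test function $\Phi$, compactly supported in a dyadic interval like $[1,2]$, and chosen so that $\widehat\Phi(1)>0$. This way, an upper bound on $N(X)$ immediately converts to an upper bound on the number of admissible $d\leq X$ of the stated type.

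The first step is a lower bound for the twisted first moment. Applying \eqref{1stmoment}, which is the $l=1$ specialisation of Theorem \ref{theo:1stmoment} (with the hypothesis \eqref{lcondition} vacuous), the main term has size $\gg X\log X$, since $\mathcal H(1)\neq 0$ follows in standard fashion from the Euler product expression for $\mathcal H$ in Lemma \ref{lemma:DS4P}, and the error $O(X^{7/8+\varepsilon})$ is negligible in comparison. Hence
$$\sumstar_{(d,2)=1} L(\tfrac12,\chi_{8d})L(\tfrac12,f\otimes\chi_{8d})\,\Phi\!\left(\frac{d}{X}\right) \gg X\log X$$
for all sufficiently large $X$.

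The second step is to insert this lower bound together with \eqref{eq:4thmoment} and \eqref{eq:2ndmomentest} into the H\"older inequality \eqref{upperbound1}. The latter is just H\"older with exponents $(4,4,2)$ applied to the factorisation $\Phi=\Phi^{1/4}\cdot\Phi^{1/4}\cdot\Phi^{1/2}$, using that the summand vanishes wherever $L(1/2,\chi_{8d})L(1/2,f\otimes\chi_{8d})=0$. Collecting the bounds gives
$$X\log X \ll N(X)^{1/4}\cdot X^{1/4+\varepsilon}\cdot (X\log X)^{1/2},$$
which rearranges to
$$N(X) \gg X^{1-\varepsilon}(\log X)^2 \gg X^{1-\varepsilon},$$
establishing the theorem.

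The main obstacle does not lie in this final deduction, which is entirely mechanical once the ingredients are in place. The real difficulty is packed into the three cited inputs: the first moment asymptotic \eqref{1stmoment} established in this paper (whose proof occupies the bulk of the rest of the text via the approximate functional equation and Poisson summation), the sharp second moment asymptotic \eqref{eq:4thmoment} for quadratic twists of a modular $L$-function from \cite{Li22}, and the fourth moment bound \eqref{eq:2ndmomentest} for quadratic Dirichlet $L$-functions from \cite{DRHB}. Granted these, no further tools (mollification, large sieve, or explicit formulas) are needed; the parity hypothesis $\kappa\equiv 0\pmod 4$ and the restriction to odd square-free $d$ are inherited directly from \eqref{1stmoment}.
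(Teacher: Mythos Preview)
Your proposal is correct and follows exactly the approach of the paper: the theorem is deduced from the H\"older inequality \eqref{upperbound1} together with the first moment asymptotic \eqref{1stmoment}, the second moment \eqref{eq:4thmoment} from \cite{Li22}, and the fourth moment bound \eqref{eq:2ndmomentest} from \cite{DRHB}. You have simply made each step explicit (including the verification that $\mathcal H(1)\neq 0$, which the paper leaves implicit), and your arithmetic in rearranging the H\"older inequality is correct.
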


    We note that the above result improves upon \cite[Corollary 1.4]{Maiti}.  Moreover, it is pointed out in \cite{Li22} that the arguments used in the proof of \cite[Theorem 1.1]{Li22} can be extended to show that
\begin{align}
\label{eq:2ndmoment}
   \sumstar_{\substack{(d,2)=1}} L(\tfrac{1}{2}, \chi_{8d})^{4}\Phi \Big( \frac d{X} \Big)  \sim &  X(\log X)^{10}.
\end{align}
   Previously, the above result was known to hold under GRH by Q. Shen \cite{Shen}.  If \eqref{eq:2ndmoment} holds and is used instead of \eqref{eq:2ndmomentest}, then the result stated in Theorem \ref{coro:nonvanish} can be improved so that the number of quadratic Dirichlet characters $\chi_{8d}$ with $d \leq X$ such that $L(1/2, \chi_{8d})L(1/2, f \otimes \chi_{8d}) \neq 0$ is $\gg X/(\log X)^{8}$.

\section{Preliminaries}
\label{sec 2}

\subsection{Sums over primes}
\label{sec2.1}

    We reserve the letter $p$ for a prime number in this paper.  Here we gather a few results concerning sums over primes.
\begin{lemma}
\label{RS} Let $x \geq 2$. We have, for some constants $b_1, b_2$,
\begin{align}
\label{merten}
\sum_{p\le x} \frac{1}{p} =& \log \log x + b_1+ O\Big(\frac{1}{\log x}\Big), \\
\label{M1}
\sum_{p\le x} \frac{\lambda^2_f(p)}{p} =& \log \log x + b_2+ O\Big(\frac{1}{\log x}\Big).
\end{align}
Moreover,
\begin{equation}
\label{mertenpartialsummation}
\sum_{p\le x} \frac {\log p}{p} = \log x + O(1).
\end{equation}
\end{lemma}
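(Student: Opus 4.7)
The three statements are classical in nature, so the plan is to derive them from standard analytic inputs. For \eqref{mertenpartialsummation}, I would begin with the elementary identity $\sum_{n \leq x} \Lambda(n)/n = \log x + O(1)$, obtained from Legendre's formula for $\log \lfloor x \rfloor!$, and subtract the contribution of prime powers $p^k$ with $k \geq 2$, which is absolutely bounded by $\sum_p \sum_{k \geq 2} (\log p)/p^k = O(1)$. This yields \eqref{mertenpartialsummation} directly, without appealing to the prime number theorem.

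Given \eqref{mertenpartialsummation}, estimate \eqref{merten} follows by partial summation. Writing $R(t) := \sum_{p \leq t} (\log p)/p - \log t = O(1)$, one decomposes
\begin{equation*}
\sum_{p \leq x} \frac{1}{p} = \int_{2}^{x} \frac{\dif t}{t \log t} + \int_{2}^{x} \frac{R(t)\, \dif t}{t (\log t)^2} + \frac{R(x)}{\log x} + O(1),
\end{equation*}
where the first integral equals $\log \log x + O(1)$ while the second converges absolutely and is absorbed into the constant $b_1$.

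The interesting estimate is \eqref{M1}, for which the key input is the Hecke multiplicativity relation $\lambda_f(p)^2 = \lambda_f(p^2) + 1$ together with the identification $\lambda_f(p^2) = \lambda_{\sym f}(p)$, the $p$-th Hecke eigenvalue of the symmetric square lift. This gives
\begin{equation*}
\sum_{p \leq x} \frac{\lambda_f(p)^2}{p} = \sum_{p \leq x} \frac{1}{p} + \sum_{p \leq x} \frac{\lambda_{\sym f}(p)}{p},
\end{equation*}
so \eqref{merten} handles the first sum. For the second, I would use the analytic continuation of $L(s, \sym f)$ to the entire complex plane and its non-vanishing on the line $\Re(s) = 1$, established for holomorphic forms by Shimura and in greater generality by Gelbart and Jacquet. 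A standard de la Vall\'ee Poussin-type zero-free region for this $\GL(3)$ $L$-function yields a prime number theorem of the form $\sum_{p \leq x} \lambda_{\sym f}(p) \log p \ll x \exp(-c\sqrt{\log x})$, and partial summation then shows that $\sum_{p \leq x} \lambda_{\sym f}(p)/p$ converges to some constant $c_f$ with error $O(1/\log x)$. Setting $b_2 := b_1 + c_f$ yields \eqref{M1}. The main technical point is this quantitative error term, which requires a zero-free region rather than mere convergence; once that is granted, everything else is a routine exercise in partial summation.
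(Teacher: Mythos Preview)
Your proposal is correct. The paper's own proof is simply a list of citations --- \eqref{merten} and \eqref{mertenpartialsummation} are quoted from Montgomery--Vaughan, and \eqref{M1} from the Rankin--Selberg theory in Iwaniec--Kowalski (and a lemma of Radziwi{\l\l}--Soundararajan) --- so your sketch in fact supplies the content behind those references; in particular, your decomposition $\lambda_f(p)^2 = 1 + \lambda_{\sym f}(p)$ together with a zero-free region for $L(s,\sym f)$ is precisely what underlies the Rankin--Selberg citation, since $L(s, f \times f) = \zeta(s) L(s, \sym f)$ for level-one forms.
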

\begin{proof}
  The expressions in \eqref{merten} and \eqref{mertenpartialsummation} can be found in parts (d) and (b) of \cite[Theorem
2.7]{MVa},  respectively. The formula in \eqref{M1} follows from the Rankin--Selberg theory for $L(s, f)$, which can be found in \cite[Chapter 5]{iwakow}(see also \cite[Lemma 3]{Radziwill&Sound}).
\end{proof}

\subsection{Modular $L$-functions}
\label{sec:cusp form}

  Recall that the Fourier expansion of any holomorphic Hecke eigenform $f$ of weight $\kappa $ for the full modular group $SL_2 (\mathbb{Z})$ at infinity is given in \eqref{fFourier}. For $\Re(s) > 1$, the associated modular $L$-function $L(s, f)$ is defined by
\begin{align}
\label{Lfdef}
L(s, f) &= \sum_{n=1}^{\infty} \frac{\lambda_f (n)}{n^s}=
 \prod_{p} \left(1 - \frac{\lambda_f (p)}{p^s}  + \frac{1}{p^{2s}}\right)^{-1}= \prod_{p} \prod^2_{j =1} \left(1- \frac{\alpha_{f}(p,j)}{p^s} \right)^{-1}.
\end{align}

  The above series and product converge absolutely for $\Re(s)>1$ and $L(s, f)$ extends analytically to the entire complex
plane, satisfying the following functional equation (corresponding to the case $d=1$ of \eqref{equ:FEfd})
\begin{align*}
\Lambda (s, f ) = \left(\frac{1}{2\pi} \right)^s \Gamma \left( s + \frac{\kappa -1}{2} \right) L(s, f )
= i^\kappa \Lambda (1- s, f ).
\end{align*}

  By Deligne's proof \cite{D} of the Weil conjecture, we know that
\begin{align}
\label{alpha}
|\alpha_{f}(p,1)|=|\alpha_{f}(p,2)|=1, \quad \alpha_{f}(p,1)\alpha_{f}(p,2)=1.
\end{align}
It follows from  this and \eqref{Lfdef} that $\lambda_f(n)$ is multiplicative and for $\nu \geq 1$, we have
\begin{align*}
\begin{split}
 \lambda_{f}(p^{\nu})=& \sum^{\nu}_{j=0}\alpha^{\nu-j}_f(p,1)\alpha^j_{f}(p,2).
\end{split}
\end{align*}
  The above relation further implies that $\lambda_f (n) \in \mr$, $\lambda_f (1) =1$ and $|\lambda_f (n)|
\leq d(n)$ for $n \geq 1$ with $d(n)$ denoting the number of divisors function of $n$. \newline

 Recall also that the symmetric square $L$-function $L(s, \operatorname{sym}^2 f)$ of $f$ is defined for $\Re(s)>1$ by
 (see \cite[p. 137]{iwakow} and \cite[(25.73)]{iwakow})
\begin{align*}
\begin{split}
 L(s, \operatorname{sym}^2 f)=& \prod_p\prod_{1 \leq i \leq j \leq 2}(1-\alpha_{f}(p, i)\alpha_{f}(p, j)p^{-s})^{-1}=\prod_p (1-\alpha^2_{f}(p, 1)p^{-s})^{-1}(1-p^{-s})^{-1}(1-\alpha^2_{f}(p, 2)p^{-s})^{-1} \\
    =& \zeta(2s) \sum_{n = 1}^{\infty} \frac {\lambda_f(n^2)}{n^s}=\prod_{p} \left( 1-\frac {\lambda_f(p^2)}{p^s}+\frac {\lambda_f(p^2)}{p^{2s}}-\frac {1}{p^{3s}} \right)^{-1}.
\end{split}
\end{align*}

  It follows from a result of G. Shimura \cite{Shimura} that the corresponding completed $L$-function
\begin{align*}
 \Lambda(s, \operatorname{sym}^2 f)=& \pi^{-3s/2}\Gamma \left( \frac {s+1}{2} \right)\Gamma \left(\frac {s+\kappa-1}{2} \right) \Gamma \left( \frac {s+\kappa}{2} \right) L(s, \operatorname{sym}^2 f)
\end{align*}
  is entire and satisfies the functional equation
$\Lambda(s, \operatorname{sym}^2 f)=\Lambda(1-s, \operatorname{sym}^2 f)$.

\subsection{The approximate functional equation}
\label{sect: apprfcneqn}

   Let $W(s)$ be an even entire function with rapid decay in the strip $|\Re(s)| \leq 10$ such that $W(0)=1$.  We define for $t>0$,
\begin{align}
\label{eq:Vdef}
 V(t) = \frac{1}{2 \pi i} \int\limits\limits_{(2)} \frac{W(s)}{s} w(s) t^{-s} \dif s, \quad \mbox{where} \quad w(s)= \frac {\Gamma (\frac{1}{4}+\frac s2)\Gamma (\frac{\kappa}{2}+s)}{\Gamma (\frac{1}{4})\Gamma (\frac{\kappa}{2})}\left(\frac{8}{2\pi} \right)^{s}\left(\frac{8}{\pi} \right)^{s/2}.
\end{align}
Now Stirling's formula (see \cite[(5.112)]{iwakow}) implies that for a fixed $\Re(s)$,
\begin{align}
\label{wsdecay}
 w(s) \ll e^{-|s|}.
\end{align}

  The above then implies that for any $A>0$,
\begin{align}
\label{Vest}
 V(t) \ll  \min (1, t^{-A} ).
\end{align}

   The following lemma allows us to express $L(\frac 12, \chi_{8d})L(\frac 12, f \otimes \chi_{8d})$ in terms of an essentially finite Dirichlet series.
\begin{lemma}[Approximate functional equation]
\label{lem:AFE}
 With the notation as above, we have  for positive, odd and square-free $d$ and $\kappa \equiv 0 \pmod 4$,
\begin{align*}
  L(\tfrac 12, \chi_{8d})L(\tfrac 12, f \otimes \chi_{8d}) =& 2\sum_{n = 1}^{\infty} \frac{\chi_{8d}(n)\sigma_{f}(n)}{n^{1/2}}V \left(  \frac {n}{d^{3/2}} \right), \quad \mbox{where} \quad  \sigma_{f}(n) = \sum_{ m|n}\lambda_f(m).
\end{align*}
\end{lemma}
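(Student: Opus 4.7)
The plan is to derive the approximate functional equation by a standard Mellin/contour argument applied to the product of completed $L$-functions. Set
\begin{equation*}
\Lambda_1(s) = \left(\tfrac{8d}{\pi}\right)^{s/2}\Gamma\!\left(\tfrac{s}{2}\right)L(s,\chi_{8d}), \qquad \Lambda_2(s) = \Lambda(s, f\otimes\chi_{8d}),
\end{equation*}
and consider the contour integral
\begin{equation*}
I \;=\; \frac{1}{2\pi i}\int_{(2)} \Lambda_1\!\left(\tfrac12+s\right)\Lambda_2\!\left(\tfrac12+s\right)\frac{W(s)}{s}\,\dif s.
\end{equation*}
On the line $\Re(s)=2$ both $L$-factors converge absolutely, and the rapid decay of $W$ together with Stirling's formula makes the contour shifts justified.

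First I would shift the contour to $\Re(s)=-2$, crossing only the simple pole of $1/s$ at $s=0$. Since $W(0)=1$, the residue contribution is $\Lambda_1(\tfrac12)\Lambda_2(\tfrac12)$. On the new line I substitute $s\mapsto -s$; combining the evenness of $W$ with the functional equations $\Lambda_j(\tfrac12+s)=\Lambda_j(\tfrac12-s)$ (both epsilon factors being $+1$ by the hypotheses $d>0$ odd square-free and $\kappa\equiv 0\pmod 4$, as recorded after \eqref{equ:FEfd}), the shifted integral equals $-I$. Thus $2I=\Lambda_1(\tfrac12)\Lambda_2(\tfrac12)$.

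Second I would return to the original $\Re(s)=2$ contour and expand the product $L(\tfrac12+s,\chi_{8d})L(\tfrac12+s,f\otimes\chi_{8d})$ as an absolutely convergent Dirichlet series. Since $\chi_{8d}$ is completely multiplicative, the Dirichlet convolution collapses to
\begin{equation*}
\sum_{ab=n}\chi_{8d}(a)\lambda_f(b)\chi_{8d}(b) \;=\; \chi_{8d}(n)\sum_{m\mid n}\lambda_f(m) \;=\; \chi_{8d}(n)\,\sigma_f(n).
\end{equation*}
Interchanging sum and integral and factoring out the value of the archimedean factors at $s=0$ (namely $\Lambda_1(\tfrac12)\Lambda_2(\tfrac12)/[L(\tfrac12,\chi_{8d})L(\tfrac12,f\otimes\chi_{8d})]$), the remaining integrand is exactly the ratio of Gamma factors appearing in $w(s)$ in \eqref{eq:Vdef} times a power of $d^{3/2}/n$; matching constants shows that each $n$-th term yields $V(n/d^{3/2})$.

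Combining these two computations gives
\begin{equation*}
\Lambda_1(\tfrac12)\Lambda_2(\tfrac12) = 2I = 2\,\Lambda_1(\tfrac12)\Lambda_2(\tfrac12)\cdot\frac{1}{L(\tfrac12,\chi_{8d})L(\tfrac12,f\otimes\chi_{8d})}\sum_{n\geq 1}\frac{\chi_{8d}(n)\sigma_f(n)}{n^{1/2}}V\!\left(\frac{n}{d^{3/2}}\right),
\end{equation*}
and dividing through by the nonzero completed-$L$ factor produces the stated identity. The genuinely delicate point is bookkeeping: one must verify that after dividing the archimedean pieces of $\Lambda_1(\tfrac12+s)\Lambda_2(\tfrac12+s)$ by their values at $s=0$, the remaining $s$-dependent multiplier is precisely $w(s)\,(d^{3/2}/n)^s$ with the normalization in \eqref{eq:Vdef}; the powers of $2$ and $\pi$ coming from the two conductors $8d$ and $(8d/2\pi)$ must line up to produce the specific factors $(8/2\pi)^s(8/\pi)^{s/2}$ inside $w(s)$. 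Beyond this arithmetic of constants, every step is routine.
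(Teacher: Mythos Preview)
Your approach is the standard contour-shift argument underlying \cite[Theorem 5.3]{iwakow}, which is exactly what the paper invokes after recording the combined functional equation $\Lambda(s,f,d)=\Lambda(1-s,f,d)$ with root number $+1$. So the strategy matches; you are simply unpacking the reference.

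There is one genuine slip in the bookkeeping of the final step. You write $2I = 2\,\Lambda_1(\tfrac12)\Lambda_2(\tfrac12)\cdot \bigl[L(\tfrac12,\chi_{8d})L(\tfrac12,f\otimes\chi_{8d})\bigr]^{-1}\sum_n(\cdots)$ and then ``divide through by the nonzero completed-$L$ factor.'' But $\Lambda_1(\tfrac12)\Lambda_2(\tfrac12)$ vanishes precisely when the central $L$-value does, and the entire paper is about detecting when this happens --- you cannot assume it is nonzero. The fix is immediate: factor out only the archimedean piece
\[
A \;=\; \Bigl(\tfrac{8d}{\pi}\Bigr)^{1/4}\Gamma\bigl(\tfrac14\bigr)\,\Bigl(\tfrac{8d}{2\pi}\Bigr)^{1/2}\Gamma\bigl(\tfrac{\kappa}{2}\bigr),
\]
which is manifestly nonzero, so that $I = A\sum_n \chi_{8d}(n)\sigma_f(n)n^{-1/2}V(n/d^{3/2})$ and $2I = A\cdot L(\tfrac12,\chi_{8d})L(\tfrac12,f\otimes\chi_{8d})$; cancelling $A$ gives the identity for all $d$, including those with vanishing central value. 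With this correction your argument is complete, and your verification that the leftover $s$-dependent multiplier is $w(s)\,(d^{3/2}/n)^s$ is correct.
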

\begin{proof}
   Note that when $d$ is positive, odd and square-free, the $L$-function $L(s, \chi_{8d})$ satisfies the functional equation (see \cite[p. 456]{sound1}) given by
\begin{align}
\label{fneqnquad}
  \Lambda(s, \chi_{8d}) := \Big(\frac {8d} {\pi} \Big )^{s/2}\Gamma \Big( \frac {s}{2} \Big) L(s, \chi_{8d})=\Lambda(1-s,  \chi_{8d}).
\end{align}

   Similarly, for $\kappa \equiv 0 \pmod 4$, the functional equation given in \eqref{equ:FEfd} for $L(s, f \otimes \chi_{8d})$ becomes
\begin{align}
\label{equ:FE}
\Lambda (s, f \otimes \chi_{8d}) =  \left(\frac{8d}{2\pi} \right)^s \Gamma \Big( s + \frac{\kappa -1}{2} \Big) L(s, f \otimes \chi_{8d})
= \Lambda (1- s, f \otimes \chi_{8d}).
\end{align}

   It follows from \eqref{fneqnquad} and \eqref{equ:FE} that the product of the two $L$-functions $L(s, \chi_{8d})L(s, f \otimes \chi_{8d})$ satisfies the functional equation given by
\begin{align*}
\Lambda (s, f, d) := \Big(\frac {8d} {\pi} \Big )^{s/2}\left(\frac{8d}{2\pi} \right)^s \Gamma \Big(\frac {s}{2} \Big) \Gamma \Big(s + \frac{\kappa -1}{2}\Big) L(s, \chi_{8d})L(s, f \otimes \chi_{8d})
= \Lambda (1- s, f, d).
\end{align*}

  The assertion of the lemma now follows from the above and \cite[Theorem 5.3]{iwakow}.
\end{proof}

\subsection{Gauss sums and Poisson summation}
\label{section:Gauss}

  For all odd integers $k$ and all integers $m$, we define a Gauss-type sums $G_m(k)$, as in \cite[Sect. 2.2]{sound1},
\begin{align}
\label{G}
    G_m(k)=
    \left( \frac {1-i}{2}+\left( \frac {-1}{k} \right)\frac {1+i}{2}\right)\sum_{a \shortmod{k}}\left( \frac {a}{k} \right) e \left( \frac {am}{k} \right).
\end{align}

Let $\phi(m)$ be the Euler totient function of $m$. The next lemma is taken from \cite[Lemma 2.3]{sound1} and evaluates $G_m(k)$.
\begin{lemma}
\label{lem:Gauss}
   If $(k_1,k_2)=1$, then $G_m(k_1k_2)=G_m(k_1)G_m(k_2)$. Suppose that $p^a$ is
   the largest power of $p$ dividing $m$ (put $a=\infty$ for $m=0$).
   Then for $b \geq 1$ we have
\begin{equation*}
\label{011}
    G_m(p^b)= \left\{\begin{array}{cl}
    0  & \mbox{if $b\leq a$ is odd}, \\
    \phi(p^b) & \mbox{if $b\leq a$ is even},  \\
    -p^a  & \mbox{if $b=a+1$ is even}, \\
    (\frac {m/p^a}{p})p^a\sqrt{p}  & \mbox{if $b=a+1$ is odd}, \\
    0  & \mbox{if $b \geq a+2$}.
    \end{array}\right.
\end{equation*}
\end{lemma}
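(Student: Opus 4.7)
The plan is to factor $G_m(k) = \varepsilon_k S(m,k)$, where $\varepsilon_k := \tfrac{1-i}{2} + \left(\frac{-1}{k}\right)\tfrac{1+i}{2}$ is the prefactor and $S(m,k) := \sum_{a \pmod k}\left(\frac{a}{k}\right) e(am/k)$ is the core Jacobi--Gauss sum. A quick case check gives $\varepsilon_k = 1$ when $k\equiv 1\pmod 4$ and $\varepsilon_k = -i$ when $k\equiv 3\pmod 4$; in particular $\varepsilon_{p^b}=1$ whenever $b$ is even, since an odd prime power $p^{2c}$ is congruent to $1$ mod $4$.

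For multiplicativity with coprime odd $k_1, k_2$, I would parametrize $a \pmod{k_1 k_2}$ via the Chinese remainder theorem as $a \equiv a_1 k_2 \overline{k_2} + a_2 k_1 \overline{k_1}$, where $\overline{k_i}$ denotes the inverse of $k_i$ modulo $k_{3-i}$. Using the multiplicativity of the Jacobi symbol, the reduction $am/(k_1 k_2) \equiv a_1 m \overline{k_2}/k_1 + a_2 m \overline{k_1}/k_2 \pmod 1$, and the change of variable $a_i \mapsto a_i k_{3-i}$ (a bijection on residues mod $k_i$), one obtains
\[
S(m,k_1k_2) = \left(\frac{k_1}{k_2}\right)\left(\frac{k_2}{k_1}\right) S(m,k_1)\, S(m,k_2).
\]
By quadratic reciprocity for the Jacobi symbol this cross product equals $(-1)^{(k_1-1)(k_2-1)/4}$, and a routine case analysis over the three possibilities $(k_1 \bmod 4, k_2 \bmod 4)\in\{1,3\}^2$ confirms $\varepsilon_{k_1}\varepsilon_{k_2} = (-1)^{(k_1-1)(k_2-1)/4}\,\varepsilon_{k_1k_2}$. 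Combining these two identities yields $G_m(k_1k_2) = G_m(k_1)G_m(k_2)$.

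For the prime-power evaluation at $p^b$ with $p$ an odd prime I would split on the parity of $b$. When $b$ is even, $\left(\frac{a}{p^b}\right)$ reduces to the indicator of $(a,p)=1$ and $\varepsilon_{p^b}=1$; inclusion--exclusion then gives $S(m,p^b) = p^b\,\mathbf{1}[p^b\mid m] - p^{b-1}\,\mathbf{1}[p^{b-1}\mid m]$, which, evaluated against $a = v_p(m)$, yields $\phi(p^b)$ when $b\leq a$, $-p^a$ when $b = a+1$, and $0$ when $b \geq a+2$. When $b$ is odd, $\left(\frac{a}{p^b}\right) = \left(\frac{a}{p}\right)$ depends only on $a \pmod p$, so writing $a = a_0 + p a_1$ with $a_0$ running mod $p$ and $a_1$ running mod $p^{b-1}$ factors the sum as $\bigl(\sum_{a_0} \left(\frac{a_0}{p}\right) e(a_0 m/p^b)\bigr)\bigl(\sum_{a_1} e(a_1 m/p^{b-1})\bigr)$. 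The inner $a_1$-sum vanishes unless $p^{b-1}\mid m$, i.e.\ $a \geq b-1$; when $a \geq b$ the remaining Gauss sum collapses to $\sum_{a_0 \pmod p}\left(\frac{a_0}{p}\right) = 0$, while when $a = b-1$, writing $m = p^a m'$ with $(m',p) = 1$ produces $p^{b-1}\left(\frac{m'}{p}\right)\tau(\chi_p)$, and the identity $\varepsilon_{p^b}\tau(\chi_p) = \sqrt{p}$ (verified for both $p\equiv 1$ and $p\equiv 3\pmod 4$ from the classical $\tau(\chi_p)=\sqrt{p}$ or $i\sqrt{p}$) delivers $G_m(p^b) = \left(\frac{m/p^a}{p}\right) p^a \sqrt{p}$. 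The principal obstacle throughout is bookkeeping of the prefactor $\varepsilon_k$: one must check that quadratic reciprocity exactly absorbs the sign discrepancy between $\varepsilon_{k_1}\varepsilon_{k_2}$ and $\varepsilon_{k_1 k_2}$, and that the same prefactor cleans up $\tau(\chi_p)$ into a uniform $\sqrt{p}$ independently of $p \bmod 4$; once these bookkeeping identities are in place, the rest reduces to elementary orthogonality sums.
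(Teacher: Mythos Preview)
Your argument is correct and is the standard direct computation: the CRT decomposition together with quadratic reciprocity handles the multiplicativity (the prefactor $\varepsilon_k$ is designed precisely so that the reciprocity sign is absorbed), and the prime-power case splits cleanly on the parity of $b$ into a Ramanujan-type sum and a reduction to the classical quadratic Gauss sum $\tau(\chi_p)$.

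The paper itself does not prove this lemma; it simply quotes it as \cite[Lemma~2.3]{sound1}. So there is no approach to compare against---you have supplied a self-contained proof where the paper defers to the literature. One minor stylistic remark: in the odd-$b$ case you might state explicitly that $\left(\frac{a_0}{p}\right)=0$ when $p\mid a_0$, so the decomposition $a=a_0+pa_1$ automatically excludes the terms with $p\mid a$ that the Jacobi symbol kills, but this is implicit in what you wrote.
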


    For any smooth function $F$, define
\begin{equation} \label{tildedef}
   \widetilde{F}(\xi)=\int\limits^{\infty}_{-\infty}\left(\cos(2\pi \xi
   x)+\sin(2\pi \xi x) \right)F(x) \dif x.
\end{equation}

  We cite the following Poisson summation formula from \cite[Lemma 2.6]{sound1}.
\begin{lemma}
\label{lem2}
   Let $W$ be a smooth function compactly supported on ${\mr}^+$. We have for any odd integer $n$,
\begin{equation*}
\label{013}
  \sum_{(d,2)=1}\left( \frac {d}{n} \right)
    W\left( \frac {d}{X} \right)=\frac {X}{2n}\left( \frac {2}{n} \right)
    \sum_k(-1)^kG_k(n)\widetilde{W}\left( \frac {kX}{2n} \right),
\end{equation*}
where $G_k(n)$ is defined in \eqref{G} and $\widetilde{W}$ is defined in \eqref{tildedef}.
\end{lemma}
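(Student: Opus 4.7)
The plan is to prove this Poisson summation formula for the real quadratic character $d\mapsto\leg{d}{n}$ (with $n$ odd) by a standard residue-class decomposition, followed by the classical Poisson summation formula and the evaluation of a finite Gauss-type character sum.

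Since $n$ is odd, the Kronecker symbol $d\mapsto\leg{d}{n}$ is periodic of period $n$, while the condition $(d,2)=1$ is periodic of period $2$, so the whole arithmetic factor has period $2n$. First I would decompose
\[
\sum_{(d,2)=1}\leg{d}{n} W\!\left(\frac{d}{X}\right)
= \sum_{\substack{a\bmod 2n\\ a\text{ odd}}}\leg{a}{n}\sum_{j\in\mathbb{Z}} W\!\left(\frac{2nj+a}{X}\right),
\]
and apply classical Poisson summation to each inner sum over $j$, obtaining
\[
\frac{X}{2n}\sum_{k\in\mathbb{Z}}\widehat{W}\!\left(\frac{kX}{2n}\right)\sum_{\substack{a\bmod 2n\\ a\text{ odd}}}\leg{a}{n}\,e\!\left(\frac{ka}{2n}\right),
\]
where $\widehat W(\xi)=\int_{\mathbb{R}} W(x)\,e(-x\xi)\,\dif x$. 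Parametrising odd residues modulo $2n$ via the bijection $m\mapsto 2m+n\pmod{2n}$ for $m\in\mathbb{Z}/n\mathbb{Z}$ (valid because $n$ is odd), and using $e(kn/(2n))=(-1)^k$ together with $\leg{2m+n}{n}=\leg{2}{n}\leg{m}{n}$, the inner character sum collapses to $(-1)^k\leg{2}{n}g_k(n)$, where $g_k(n):=\sum_{m\bmod n}\leg{m}{n}e(km/n)$ is the classical quadratic Gauss sum modulo $n$.

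The final step is to combine the $k$ and $-k$ contributions so as to convert the complex Fourier kernel $\widehat W$ into the real-valued kernel $\widetilde W$ of \eqref{tildedef}. Using the easily checked identity $g_{-k}(n)=\leg{-1}{n}g_k(n)$ (via $m\mapsto -m$) together with the standard cosine/sine decompositions of $\widehat W(\pm\xi)$ and $\widetilde W(\pm\xi)$, I would verify by splitting on the sign of $\leg{-1}{n}$ the identity
\[
\widehat W(\xi)+\leg{-1}{n}\widehat W(-\xi)
=\Bigl(\tfrac{1-i}{2}+\leg{-1}{n}\tfrac{1+i}{2}\Bigr)\bigl[\widetilde W(\xi)+\leg{-1}{n}\widetilde W(-\xi)\bigr].
\]
The prefactor on the right is precisely the one built into the definition \eqref{G} of $G_k(n)$, so after pairing $\pm k$ the Poisson side assembles into $\frac{X}{2n}\leg{2}{n}\sum_k (-1)^k G_k(n)\widetilde W(kX/(2n))$, as asserted.

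The main (and only) subtlety is the bookkeeping in this last step: one must verify that the apparently ad hoc prefactor $\frac{1-i}{2}+\leg{-1}{n}\frac{1+i}{2}$ in \eqref{G} is engineered exactly so that the natural pairing produced by Poisson summation turns $\widehat W$ into the $\cos+\sin$ kernel $\widetilde W$. The remaining ingredients (mod $2n$ splitting, classical Poisson, and the evaluation of $g_k(n)$) are entirely routine, and the $k=0$ term causes no trouble because $g_0(n)=0$ unless the Kronecker symbol $\leg{\cdot}{n}$ is principal, in which case $\leg{-1}{n}=1$ and the prefactor equals $1$.
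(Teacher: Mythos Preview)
Your argument is correct, and each of the four steps (mod $2n$ residue-class splitting, classical Poisson summation, evaluation of the finite character sum via the parametrisation $a=2m+n$, and the $\pm k$ pairing that converts $\widehat W$ into $\widetilde W$) checks out; in particular your key identity $\widehat W(\xi)+\leg{-1}{n}\widehat W(-\xi)=c_n\bigl[\widetilde W(\xi)+\leg{-1}{n}\widetilde W(-\xi)\bigr]$ with $c_n=\tfrac{1-i}{2}+\leg{-1}{n}\tfrac{1+i}{2}$ is exactly right in both cases $\leg{-1}{n}=\pm1$.

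The paper itself does not give a proof of this lemma at all: it simply quotes it as \cite[Lemma~2.6]{sound1}. What you have written is essentially a reconstruction of Soundararajan's original argument, so there is no methodological difference to compare.
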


\subsection{Upper bounds for moments of quadratic twists of $L$-functions}
\label{sec2.4}

  In the proof of Theorem \ref{theo:1stmoment}, we need the following large sieve result for quadratic twists of $L$-functions.
\begin{lemma}
\label{lem:HB}
   With the notation above, we have, for any complex number $\sigma+it$  with $\sigma \geq 1/2$ and any $\varepsilon>0$,
\begin{align} \label{eq:Qbound}
\begin{split}
 \sumstar_{\substack{(d,2)=1 \\ d \leq X}} |L(\sigma + it, \chi_{8d})|^2 \ll_{\varepsilon} (X(1+|t|)^{1/2})^{1 + \varepsilon} \quad \mbox{and} \quad
 \sumstar_{\substack{(d,2)=1 \\  d \leq X}} |L(\sigma + it, f \otimes \chi_{8d})|^2 \ll_{\varepsilon} (X(1+|t|))^{1 + \varepsilon}.
\end{split}
\end{align}
\end{lemma}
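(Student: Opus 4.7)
The plan is to prove both bounds by the same scheme: apply an approximate functional equation to each $L$-value, decompose dyadically, and then invoke Heath-Brown's quadratic large sieve
\[
\sumstar_{\substack{(d,2)=1 \\ d\le X}} \Big|\sum_{n\le N} a_n \chi_{8d}(n)\Big|^2 \ll_\varepsilon (XN)^\varepsilon (X+N)\sum_{n\le N}|a_n|^2,
\]
valid for any complex coefficients $a_n$ supported on odd integers (the analogue for characters $\chi_{8d}$ follows from the odd case). The main technical point is pairing the length of the approximate functional equation with the correct factor $(X+N)$, and then controlling the resulting $\ell^2$ norm by Rankin--Selberg style estimates.

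For the first bound, recall that $\chi_{8d}$ is an even primitive character modulo $8d$, so the analytic conductor of $L(\sigma+it,\chi_{8d})$ is $\asymp d(1+|t|)$. A standard approximate functional equation then writes $L(\sigma+it,\chi_{8d})$, up to a root-number-times-dual piece of the same shape, as a smoothed Dirichlet polynomial of length $N\ll (d(1+|t|))^{1/2+\varepsilon}$ with coefficients $a_n=n^{-\sigma-it}\psi(n/N)$ for some smooth cutoff $\psi$. Splitting into $O(\log(X(1+|t|)))$ dyadic ranges $n\asymp N_j\ll (X(1+|t|))^{1/2+\varepsilon}$ and applying the large sieve to each yields
\[
\sumstar_{d\le X}\Big|\sum_{n\sim N_j}a_n\chi_{8d}(n)\Big|^2 \ll_\varepsilon (XN_j)^\varepsilon (X+N_j)\sum_{n\sim N_j}\frac{1}{n^{2\sigma}} \ll_\varepsilon (XN_j)^\varepsilon (X+N_j),
\]
since $\sigma\ge 1/2$. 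Summing over the dyadic ranges and using $N_j\ll (X(1+|t|))^{1/2}$ gives $X+N_j\ll X+\sqrt{X(1+|t|)}\ll (X(1+|t|)^{1/2})^{1+\varepsilon}$, which is the required estimate after Cauchy--Schwarz to convert the square of a sum of $O(\log)$ dyadic pieces into a sum of squares.

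For the second bound, the analytic conductor of $L(\sigma+it,f\otimes\chi_{8d})$ is $\asymp (d(1+|t|))^2$, so an approximate functional equation expresses the $L$-value as a smoothed Dirichlet polynomial of length $N\ll (d(1+|t|))^{1+\varepsilon}$ with coefficients $a_n=\lambda_f(n)n^{-\sigma-it}\psi(n/N)$. A dyadic decomposition together with Heath-Brown's inequality now yields
\[
\sumstar_{d\le X}\Big|\sum_{n\sim N_j}a_n\chi_{8d}(n)\Big|^2 \ll_\varepsilon (XN_j)^\varepsilon (X+N_j)\sum_{n\sim N_j}\frac{|\lambda_f(n)|^2}{n^{2\sigma}}.
\]
The Rankin--Selberg bound $\sum_{n\le M}|\lambda_f(n)|^2\ll M$, recorded in Section \ref{sec:cusp form}, implies $\sum_{n\sim N_j}|\lambda_f(n)|^2/n^{2\sigma}\ll N_j^{1-2\sigma}\ll 1$ for $\sigma\ge 1/2$. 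With $N_j\ll (X(1+|t|))^{1+\varepsilon}$ we get $X+N_j\ll (X(1+|t|))^{1+\varepsilon}$, and summing over the $O(\log(X(1+|t|)))$ dyadic ranges produces the claimed $(X(1+|t|))^{1+\varepsilon}$.

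The principal obstacle is really only bookkeeping: ensuring that Heath-Brown's quadratic large sieve can be applied cleanly to the squarefree sum $\sumstar_{(d,2)=1}$ with character $\chi_{8d}$ (which follows since $d\mapsto 8d$ is a bijection onto odd-squarefree $\times 8$, and the character $\chi_{8d}(n)=\left(\tfrac{8d}{n}\right)$ for odd $n$ is a genuine real primitive character in the variable $8d$), and that the rapid decay of the weight $\psi$ cuts the AFE at the stated length uniformly in $|t|$. Once these two points are verified, both estimates in \eqref{eq:Qbound} follow by the uniform scheme above.
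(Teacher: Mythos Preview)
Your argument is correct and is precisely the method underlying the results the paper cites: in the paper this lemma is not proved from scratch but simply imported, with the first bound quoted from \cite[Lemma~2.5]{sound1} and the second from \cite[Corollary~2.5]{S&Y}. What you have written is essentially a reconstruction of those proofs---approximate functional equation truncated at the square root of the analytic conductor, dyadic decomposition, Heath-Brown's quadratic large sieve, and the Rankin--Selberg bound $\sum_{n\le M}|\lambda_f(n)|^2\ll M$ for the degree-two case. So the approaches coincide at the level of ideas; the only difference is that the paper outsources the work while you carry it out. One small remark: for $\sigma>1/2$ the dual piece of the approximate functional equation is not literally ``of the same shape'' (its length is shorter and its coefficients involve $n^{-(1-\sigma)}$), but it is in any case dominated by the main piece, so your conclusion stands; alternatively one can simply prove the case $\sigma=1/2$ and observe that the bound for larger $\sigma$ follows by convexity or by the trivial inequality $|L(\sigma+it,\cdot)|\ll |L(1/2+it,\cdot)|^{(1+\varepsilon)}$ on average.
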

\begin{proof}
  The first bound in \eqref{eq:Qbound} follows from \cite[Lemma 2.5]{sound1} and the second from \cite[Corollary 2.5]{S&Y}.
\end{proof}

\subsection{Analytic behaviors of some Dirichlet Series}
    Note that any odd positive integer $l$ can be written uniquely as $l=l_1l_2^2$ with $l_1, l_2$ positive and $l_1$ square-free.  For any $L$-function, we denote $L^{(c)}$ (resp. $L_{(c)}$) for the function given by the Euler product defining $L$ but omitting those primes dividing (resp. not dividing) $c$. We also write $L_p$ for $L_{(p)}$.  For example, in this notation,
\begin{align*}
\begin{split}
 L_{p}(s, \sym f)= (1-p^{-s})^{-1}(1-\alpha^2_f(p,1)p^{-s})^{-1}(1-\alpha^2_f(p,2)p^{-s})^{-1}.
\end{split}
\end{align*}

Similar notations will be applied to any Dirichlet series with an Euler product.  We then define
\begin{align}
\label{Zdef}
\begin{split}
\mathcal Z(s, l)  = \sum_{\substack{m=1 \\ (m,2)=1}}^\infty\frac{\sigma_f(l_1m^2)}{m^s} (\prod_{p\mid lm}\frac{p}{p+1}).
\end{split}
\end{align}

   Our first result in this section concerns with the analytic properties of $\mathcal Z(s, l)$.
 \begin{lemma}
\label{lemma:DS4P}
With the notation as above, we have, for $\Re (s)$ large enough,
\begin{equation}
\label{eqn:DS-m}
\mathcal Z(s, l) =\prod\limits_{p\mid l}\Big ( \frac{p}{p+1} \Big )\zeta^{(2)}(s) L^{(2)}(s,\sym f)L^{(2)}(s, f)\mathcal H(s)\prod_{p|l}Q_p(s;l),
\end{equation}
where
\begin{align*}
\mathcal H(s)=\prod\limits_{(p,2)=1} \left(1 - \frac{1}{p^s} \right) \left(1 - \frac{\lambda_f (p)}{p^s}  + \frac{1}{p^{2s}}\right)H_p(s) \quad \mbox{with} \quad H_p(s)= \frac {p}{p+1}(1+\frac {1+\lambda_f(p)}{p^s})+\frac {1}{p+1}L_{p}(s, \sym f)^{-1},
\end{align*}
  and
\begin{align*}
\begin{split}
 Q_p(s;l)=& \left\{ \begin{aligned}
& \left(1+\lambda_f(p)+\frac{1}{p^s} \right)H_p(s)^{-1}, \quad \mbox{if} \quad p| l_1, \\
& \left( 1+\frac {1+\lambda_f(p)}{p^s} \right)H_p(s)^{-1}, \quad \mbox{if} \quad p| l, \ p \nmid l_1.
\end{aligned}
\right.
\end{split}
\end{align*}
 Moreover, the function $\mathcal H(s)$ is analytic in the region $\Re (s)>\frac{1}{2}$ and $|\prod\limits_{p|l}Q_p(s;l)| \ll\ l^{\varepsilon}$ if $\Re(s)> \tfrac{1}{2}+\varepsilon$.
\end{lemma}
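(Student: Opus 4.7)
The plan is to prove \eqref{eqn:DS-m} by writing $\mathcal Z(s,l)$ as an Euler product over odd primes and matching it factor by factor against the claimed right-hand side. Since $\sigma_f$ is multiplicative, after extracting the prefactor $\prod_{p\mid l}\tfrac{p}{p+1}$ we obtain $\mathcal Z(s,l)=\prod_{p\mid l}\tfrac{p}{p+1}\cdot \prod_{(p,2)=1}S_p(s)$, where the local factor $S_p(s)$ depends on the position of $p$ relative to $l$. Explicitly, for $p\nmid l$ one has $S_p(s)=1+\frac{p}{p+1}\sum_{a\ge 1}\sigma_f(p^{2a})/p^{as}$; for $p\mid l_1$ one has $S_p(s)=\sum_{a\ge 0}\sigma_f(p^{2a+1})/p^{as}$; and for $p\mid l$ with $p\nmid l_1$ one has $S_p(s)=\sum_{a\ge 0}\sigma_f(p^{2a})/p^{as}$.

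The heart of the proof is the following pair of generating function identities, valid for any odd prime $p$:
\begin{align*}
\sum_{a\ge 0}\frac{\sigma_f(p^{2a})}{p^{as}}=\Big(1+\frac{1+\lambda_f(p)}{p^s}\Big)L_p(s,\sym f),\qquad
\sum_{a\ge 0}\frac{\sigma_f(p^{2a+1})}{p^{as}}=\Big(1+\lambda_f(p)+\frac{1}{p^s}\Big)L_p(s,\sym f).
\end{align*}
I would derive these by writing $\sigma_f(p^j)=\sum_{k=0}^{j}\lambda_f(p^k)$, interchanging summations, and applying the generating-function identity $\sum_{k\ge 0}\lambda_f(p^k)x^k=(1-\lambda_f(p)x+x^2)^{-1}$. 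Separating the result into even and odd parts in $x$ and using $\alpha_f(p,1)\alpha_f(p,2)=1$ together with $\alpha_f(p,1)^2+\alpha_f(p,2)^2=\lambda_f(p)^2-2$ yields the closed forms above. Substituting these into the three cases of $S_p(s)$, the Euler factors $(1-1/p^s)^{-1}$ and $L_p(s,f)$ appearing on the right-hand side of \eqref{eqn:DS-m} collapse against $(1-1/p^s)(1-\lambda_f(p)/p^s+1/p^{2s})$ extracted from $\mathcal H(s)$, and an elementary rearrangement identifies precisely the stated $H_p(s)$ (in the case $p\nmid l$) and the corresponding $Q_p(s;l)$ (in the two cases $p\mid l$).

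For the analyticity of $\mathcal H(s)$ in $\Re(s)>\tfrac12$, I would expand the local factor $M_p(s):=(1-1/p^s)(1-\lambda_f(p)/p^s+1/p^{2s})H_p(s)$ as a polynomial in $T=1/p^s$. A direct computation using Deligne's bound $|\lambda_f(p)|\le 2$ reveals that the coefficient of $T$ in $M_p(s)-1$ equals $-\lambda_f(p)(1+\lambda_f(p))/(p+1)=O(1/p)$, while the remaining coefficients are $O(1)$. Hence $|M_p(s)-1|\ll p^{-1-\Re(s)}+p^{-2\Re(s)}$, and the Euler product defining $\mathcal H(s)$ converges absolutely on $\Re(s)>\tfrac12$. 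For the bound on $\prod_{p\mid l}Q_p(s;l)$, the numerators of $Q_p(s;l)$ are clearly bounded by $4$ in $\Re(s)>\tfrac12+\varepsilon$, and the same expansion yields $H_p(s)=1+O(p^{-1/2-\varepsilon})$, so $|H_p(s)|\ge\tfrac12$ for all $p$ beyond an absolute threshold, with the finitely many remaining primes contributing a universal constant. Thus $|Q_p(s;l)|\ll 1$ uniformly in $p\mid l$, and $|\prod_{p\mid l}Q_p(s;l)|\ll C^{\omega(l)}\ll l^\varepsilon$ by the standard divisor bound.

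The principal obstacle is the algebraic verification that the Euler factors of $\mathcal Z(s,l)$ match exactly the packaging chosen for $H_p(s)$ in the statement: the specific weights $p/(p+1)$ and $1/(p+1)$ in $H_p(s)$ are precisely what cause the linear-in-$T$ coefficient of $M_p(s)-1$ to acquire the extra factor $1/(p+1)$, and this sharper-than-expected decay is exactly what allows analyticity of $\mathcal H(s)$ to be pushed from the a priori barrier $\Re(s)>1$ down to $\Re(s)>\tfrac12$.
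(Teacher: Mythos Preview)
Your proposal is correct and follows essentially the same approach as the paper's proof: both expand $\mathcal Z(s,l)$ as an Euler product over odd primes, split according to whether $p\nmid l$, $p\mid l_1$, or $p\mid l$ with $p\nmid l_1$, and then invoke the key closed-form identities
\[
\sum_{h\ge 0}\frac{\sigma_f(p^{2h})}{p^{hs}}=L_p(s,\operatorname{sym}^2 f)\Big(1+\frac{1+\lambda_f(p)}{p^s}\Big),\qquad
\sum_{h\ge 0}\frac{\sigma_f(p^{2h+1})}{p^{hs}}=L_p(s,\operatorname{sym}^2 f)\Big(1+\lambda_f(p)+\frac{1}{p^s}\Big),
\]
to match the local factors to the stated packaging. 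The only real difference is one of presentation: the paper imports these identities and the remaining assertions directly from \cite[Lemma~3.1]{HH22_1} (identifying $A(n,1)$ there with $\sigma_f(n)$), whereas you sketch a self-contained derivation via $\sum_{k\ge 0}\lambda_f(p^k)x^k=(1-\lambda_f(p)x+x^2)^{-1}$ and then give an explicit estimate $M_p(s)-1=-\tfrac{\lambda_f(p)(1+\lambda_f(p))}{p+1}\,p^{-s}+O(p^{-2\Re(s)})$ to secure absolute convergence of $\mathcal H(s)$ on $\Re(s)>\tfrac12$. Your computation of this linear coefficient is correct and nicely explains why the particular weighting $\tfrac{p}{p+1},\tfrac{1}{p+1}$ in $H_p(s)$ is exactly what pushes the region of analyticity past $\Re(s)=1$. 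The one place to be a touch more careful is the phrase ``the finitely many remaining primes contributing a universal constant'': you should make explicit that the implied constant in $\prod_{p\mid l}Q_p(s;l)\ll l^{\varepsilon}$ may depend on $\varepsilon$ (and hence on the threshold in $p$), which is all the lemma claims and all that is needed downstream.
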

\begin{proof}
  We note that the assertions of the lemma follow directly from \cite[Lemma 3.1]{HH22_1}, upon identifying $A(n,1)$ there with $\sigma_f(n)$ defined in this paper and observing that we have $A(1,p)=A(p,1)$ in our case. In fact, we express $\mathcal Z(s, l)$ in terms of Euler products to get that
\begin{align} \label{eqn:DS2EP}
 \mathcal Z(s, l)=\prod\limits_{p\mid l}\Big ( \frac{p}{p+1} \Big ) \prod_{\substack{ p\mid l_1}} \Big(\sum_{h=0}^\infty \frac{\sigma_f(p^{2h+1})}{p^{sh}} \Big)
\prod_{\substack{p\mid l \\ p\nmid l_1}}\Big(\sum_{h=0}^\infty \frac{\sigma_f(p^{2h})}{p^{sh}} \Big)\prod_{\substack{p\nmid 2l}}\Big(1+\sum_{h=1}^\infty \frac{\sigma_f(p^{2h})}{(p+1)p^{sh-1}}\Big).
\end{align}

  We deduce from \cite[(3.12), (3.14)]{HH22_1} that in our case
\begin{align}
\label{nongenexpZ}
\begin{split}
 \sum_{h=0}^\infty \frac{\sigma_f(p^{2h})}{p^{sh}}
=& \left(1-\frac{1}{p^s} \right)^{-1} \left( 1- \frac{\alpha^2_f(p,1)}{p^s} \right)^{-1} \left(1- \frac{\alpha^2_f(p,2)}{p^s} \right)^{-1} \left(1+\frac {1+\lambda_f(p)}{p^s}\right), \\
 \sum_{h=0}^\infty \frac{\sigma_f(p^{2h+1})}{p^{sh}}=&\frac {1+p^s(1+\lambda_f(p))}{p^s+1+\lambda_f(p)}\sum_{h=0}^\infty \frac{\sigma_f(p^{2h})}{p^{sh}} \\
=& \left(1-\frac{1}{p^s} \right)^{-1} \left( 1- \frac{\alpha^2_f(p,1)}{p^s} \right)^{-1} \left(1- \frac{\alpha^2_f(p,2)}{p^s} \right)^{-1}  \left(1+\lambda_f(p)+\frac{1}{p^s} \right).
\end{split}
\end{align}
Moreover, we also deduce from \cite[(3.13)]{HH22_1} that
\begin{align}
\label{genericexp}
\begin{split}
 1+\sum_{h=1}^\infty \frac{\sigma_f(p^{2h})}{(p+1)p^{sh-1}}
=& \left(1-\frac{1}{p^s} \right)^{-1} \left( 1- \frac{\alpha^2_f(p,1)}{p^s} \right)^{-1} \left(1- \frac{\alpha^2_f(p,2)}{p^s} \right)^{-1} H_p(s).
\end{split}
\end{align}
The expression for $\mathcal Z(s,l)$ given in \eqref{eqn:DS-m} now follows from \eqref{eqn:DS2EP}--\eqref{genericexp}.  The other assertions of the lemma follow similarly from \cite[Lemma 3.1]{HH22_1}, completing the proof of the lemma.
\end{proof}

 Let $a,l$ be two fixed co-prime odd integers. We define for $\iota =\pm 1$ and any odd, square-free $k_1$,
\begin{align}
\label{eq:Jdef}
\begin{split}
 J_{\iota k_1}(v,w)= \sum_{k_2 \geq 1} \sum_{(n, 2a)=1} \frac {\sigma_f(n)}{n^{w}k_2^{v}}  \frac {G_{\iota k_1k^2_2}(nl)}{nl}.
\end{split}
\end{align}

The following lemma develops the analytic properties of $J_{\iota k_1}(v,w)$.
\begin{lemma} \label{lemma:Jprop}
We use the same the notation as above. Suppose that $\iota = \pm 1$, $(l, 2a) =1$, $k_1$ is square-free, and $J_{\iota k_1}(v,w) $ is given initially by \eqref{eq:Jdef} for $\Re(v)$ and $\Re(w)$ large enough.  Then for any $\delta_1, \delta_2 > 0$, $J_{\iota k_1}(v,w)$ has a meromorphic continuation to
$\Re(w) > \delta_1$ and $\Re(v) \geq 1+\delta_2$.  Furthermore, in this region we have
\begin{align}
\label{Jk1def}
  J_{\iota k_1}(v,w) = \zeta(v)L^{(2)}(\thalf + w, \chi_{\iota k_1}) L^{(2)}(\thalf + w, f \otimes \chi_{\iota k_1})K_{\iota k_1}(v, w)\prod_{p|al}I_{\iota k_1,p}(v, w),
\end{align}
 where $K_{\iota k_1}(v, w)$ is analytic in this region and
\begin{align}
\label{Iest}
 \prod_{p|al}I_{\iota k_1,p}(v, w) \ll_{\delta_1, \delta_2, \varepsilon} (a|k_1|)^{\varepsilon}l^{-1/2 + \varepsilon}.
\end{align}
\end{lemma}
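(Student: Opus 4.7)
The plan is to establish the claimed factorisation of $J_{\iota k_1}(v,w)$ by decomposing the double Dirichlet series into an Euler product, matching each local factor with the local factor of $\zeta(v)L^{(2)}(\thalf+w,\chi_{\iota k_1})L^{(2)}(\thalf+w,f \otimes \chi_{\iota k_1})$, and bounding the correction. The argument closely parallels \cite[Lemma~3.2]{HH22_1}, where an analogous Dirichlet series is analysed in the $\GL(3)$-twist setting. First, interchanging summations and using the multiplicativity of $G_m(k)$ in $k$ (Lemma \ref{lem:Gauss}) together with that of $\sigma_f$ in $n$, I would write
\[
l \cdot J_{\iota k_1}(v,w) = \prod_p \mathcal{E}_p(v,w),
\]
where each Euler factor $\mathcal{E}_p$ is a double sum over $(\alpha,\beta) = (v_p(n), v_p(k_2))$ weighted by the local Gauss sum $G_{p^{v_p(nl)}}(p^{v_p(\iota k_1 k_2^2)})$. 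This product converges absolutely for $\Re(v)>2$ and $\Re(w)>-\thalf$.

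Next, for each generic prime $p\nmid 2ak_1l$ I would apply Lemma \ref{lem:Gauss} to evaluate $G_{p^\alpha}(p^{2\beta})$ explicitly: it is nonzero only when $\alpha\geq 2\beta$ or $\alpha=2\beta-1$, with values $\phi(p^{2\beta})=p^{2\beta}(1-1/p)$ and $-p^\alpha$ respectively. Carrying out the inner geometric sum over $\beta$ yields a closed form in the variables $p^{2-v}$ and $p^{-(v+2w)}$ which, after algebraic rearrangement, matches the local factor of $\zeta(v)L^{(2)}(\thalf+w,\chi_{\iota k_1})L^{(2)}(\thalf+w,f \otimes \chi_{\iota k_1})$ times a correction that is $1+O(p^{-1-\eta})$ for some $\eta=\eta(\delta_1,\delta_2)>0$. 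The character value $\chi_{\iota k_1}(p)$ enters through the identity $G_{nl}(\iota k_1)=(nl/k_1)\sqrt{k_1}$ (up to a unit depending on $\iota k_1\bmod 4$), which distributes multiplicatively over the prime factors of $n$. For the exceptional primes $p\mid k_1$, Lemma \ref{lem:Gauss} forces $\alpha=2\beta$ and supplies an extra $\sqrt p$; these local factors are uniformly bounded and are absorbed into $K_{\iota k_1}(v,w)$, whose defining Euler product then converges absolutely in the region $\Re(w)>\delta_1$ and $\Re(v)\geq 1+\delta_2$.

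For the bad primes $p\mid al$ I would avoid exact computation and instead use the crude bound $|G_m(p^b)|\leq p^b$, immediate from Lemma \ref{lem:Gauss}. Each $I_{\iota k_1,p}(v,w)$ is then a convergent geometric series; at primes $p\mid a$ this gives $O(p^\varepsilon)$ per prime, while at $p\mid l$ the outer $1/l$ in \eqref{eq:Jdef}, paired with the $\sqrt l$-scale of the Gauss sum when $p\mid l$, produces $O(p^{-\thalf+\varepsilon})$ per prime. Multiplying across primes dividing $al$ and applying the divisor bound yields the estimate \eqref{Iest}.

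The main obstacle is the explicit matching in Step 2: verifying that the generic Euler factor $\mathcal{E}_p(v,w)$ equals the local factor of $\zeta(v)L^{(2)}(\thalf+w,\chi_{\iota k_1})L^{(2)}(\thalf+w,f \otimes \chi_{\iota k_1})$ up to an absolutely convergent correction. The two $L$-functions emerge at $\thalf+w$ rather than the $w+1$ suggested by the $n^{-(w+1)}$ weighting because the Gauss sum contributes an extra factor $p^{2\beta}=k_2^2$ in the dominant $\alpha\geq 2\beta$ regime, which rebalances against $k_2^{-v}$ and produces the half-power shift. Carrying out this identification cleanly—matching my closed-form expression involving $p^{2-v}$, $p^{1-v}$, and $p^{-(v+2w)}$ against $(1-p^{-v})^{-1}$, $(1-\chi_{\iota k_1}(p)p^{-(\thalf+w)})^{-1}$, and $(1-\lambda_f(p)\chi_{\iota k_1}(p)p^{-(\thalf+w)}+p^{-(1+2w)})^{-1}$—is the most delicate bookkeeping step. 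Once this identity is in hand, the meromorphic continuation and the final bound follow from the standard analytic properties of $\zeta$ and the two $L$-functions in the stated region.
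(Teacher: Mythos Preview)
Your overall strategy --- Euler product, match the generic local factors to the $L$-functions, bound the rest --- is exactly the paper's. However, you have reversed the two arguments of the Gauss sum. In \eqref{eq:Jdef} the summand is $G_{\iota k_1 k_2^2}(nl)/(nl)$, and by \eqref{G} the \emph{modulus} is $nl$ while the index $m$ is $\iota k_1 k_2^2$. At a generic prime $p\nmid 2ak_1 l$ the local factor is therefore $G_{\iota k_1 p^{2\beta}}(p^{\alpha})$, not $G_{p^{\alpha}}(p^{2\beta})$. With the correct orientation Lemma~\ref{lem:Gauss} gives non-vanishing when $\alpha \leq 2\beta$ with $\alpha$ even (value $\phi(p^{\alpha})$) or when $\alpha = 2\beta+1$ (value $(\iota k_1/p)\,p^{2\beta}\sqrt{p}$). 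The character $\chi_{\iota k_1}(p)$ and the shift to $\thalf+w$ both come directly from this odd case, since $p^{2\beta}\sqrt{p}/p^{\alpha}=p^{-1/2}$ combines with the weight $n^{-w}$; there is no separate global factor $G_{nl}(\iota k_1)$ ($\iota k_1$ is never a modulus here), and the extra $k_2^2$ you describe does not arise.

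The swap also undermines your bound at primes $p\mid l$. With the correct arguments, the vanishing case $b\geq a+2$ in Lemma~\ref{lem:Gauss} forces $l_p+\alpha\leq 2\beta+1$, hence $\beta\geq (l_p-1)/2$; the weight $p^{-\beta v}$ with $\Re(v)\geq 1+\delta_2$ then saves a factor $p^{-l_p/2}$ at each such prime, giving the full $l^{-1/2}$ in \eqref{Iest}. The crude bound $|G_m(p^b)|\leq p^b$ only yields $O(1)$ per prime after dividing by the $p^{l_p+\alpha}$ in the denominator, and these Gauss sums are not uniformly of size $p^{b/2}$ (the even case is $\phi(p^b)\sim p^b$), so your argument would produce at best $\operatorname{rad}(l)^{-1/2}$, not $l^{-1/2}$. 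Once you correct the orientation of $G$ and exploit the vanishing, the remainder of your outline is essentially the paper's proof.
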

\begin{proof}
  Observe that $G_{\iota k_1k^2_2}(n)$ is jointly multiplicative with respect to $k_2$ and $n$ by Lemma \ref{lem:Gauss}. This allows us to write $J_{\iota k_1}(v,w)=\prod\limits_{p}J_{\iota k_1,p}(v,w)$, with $J_{\iota k_1,p}(v,w)$ defined as follows.  For $p|2a$,
\begin{align}
\label{Jpexpa}
\begin{split}
 J_{\iota k_1,p}(v,w)= \sum^{\infty}_{j=0} \frac {1}{p^{jv}}= \frac{1}{1-p^{-v}}.
\end{split}
\end{align}
If $p \nmid 2a$ with $p^{l_p} \| l$, then
\begin{align*}
\begin{split}
 J_{\iota k_1,p}(v,w)= \sum^{\infty}_{n=0}\sum^{\infty}_{k_2=0} \frac {\sigma_f(p^n)}{p^{nw+k_2v}}\frac {G_{\iota k_1p^{2k_2}}(p^{n+l_p})}{p^{n+l_p}}.
\end{split}
\end{align*}

For $p \nmid k_1$, Lemma \ref{lem:Gauss} gives
\begin{align}
\label{Jgenexp}
\begin{split}
 J_{\iota k_1,p}(v,w)=& \sum_{n \equiv l_p \pmod 2}\sum_{2k_2 \geq n+l_p} \frac {\sigma_f(p^n)}{p^{nw+k_2v}}\frac {\varphi(p^{n+l_p})}{p^{n+l_p}}+\frac {\leg {\iota k_1}{p}}{\sqrt{p}}\sum_{2k_2 \geq l_p-1} \frac {\sigma_f(p^{2k_2+1-l_p})}{p^{(2k_2+1-l_p)w+k_2v}} \\
=& (1-p^{-v})^{-1} \sum_{n \equiv l_p \pmod 2} \frac {\sigma_f(p^n)}{p^{nw+(n+l_p)v/2}}\frac {\varphi(p^{n+l_p})}{p^{n+l_p}}+\frac {\leg {\iota k_1}{p}}{\sqrt{p}}\sum_{2k_2 \geq l_p-1} \frac {\sigma_f(p^{2k_2+1-l_p})}{p^{(2k_2+1-l_p)w+k_2v}}.
\end{split}
\end{align}
Similarly, Lemma \ref{lem:Gauss} also implies that if $p | k_1$,
\begin{align*}
\begin{split}
 J_{\iota k_1,p}(v,w)
=& (1-p^{-v})^{-1} \sum_{n \equiv l_p \pmod 2} \frac {\sigma_f(p^n)}{p^{nw+(n+l_p)v/2}}\frac {\varphi(p^{n+l_p})}{p^{n+l_p}}-p^{-1} \sum_{\substack{ 2k_2+2 \geq l_p \\ k_2 \geq 0}} \frac {\sigma_f(p^{2k_2+2-l_p})}{p^{(2k_2+2-l_p)w+k_2v}}.
\end{split}
\end{align*}

  We now apply \eqref{alpha} and \eqref{nongenexpZ} to see that when $p \nmid 2ak_1l$ and $\Re(v) \geq 1+\delta_2, \Re(w) > \delta_1$, we have
\begin{align*}
\begin{split}
 J_{\iota k_1,p}(v,w)
=& (1-p^{-v})^{-1} \Big (1+O(p^{-v-2w+\varepsilon})+\frac {\leg {\iota k_1}{p}}{p^{1/2+w}}(1-p^{-v})(1+\lambda_f(p)+O(p^{-v-2w+\varepsilon})) \Big) \\
=&  (1-p^{-v})^{-1} \Big ( 1+\frac {\leg {\iota k_1}{p}}{p^{1/2+w}}(1+\lambda_f(p))+O(p^{-v-2w+\varepsilon}+p^{-1/2-v-w+\varepsilon})\Big ) \\
:=&  (1-p^{-v})^{-1} \Big(1-\frac {\leg {\iota k_1}{p}}{p^{1/2+w}} \Big)^{-1} \Big(1-\frac {\leg {\iota k_1}{p}\lambda_f(p)}{p^{1/2+w}}+\frac {1}{p^{1+2w}} \Big)^{-1}K_{\iota k_1,p}(v,w).
\end{split}
\end{align*}

  We extend the above definition of $K_{\iota k_1,p}(v,w)$ to other $p$, so that by \eqref{Jgenexp}
\begin{align}
\label{Kdefgen}
\begin{split}
K_{\iota k_1,p}(v,w) = \Big(1-\frac {\leg {\iota k_1}{p}}{p^{1/2+w}} \Big)  & \Big(1-\frac {\leg {\iota k_1}{p}\lambda_f(p)}{p^{1/2+w}}+\frac {1}{p^{1+2w}} \Big) \\
& \times \Big ( \sum_{n \equiv 0 \pmod 2} \frac {\sigma_f(p^n)}{p^{nw+nv/2}}\frac {\varphi(p^{n})}{p^{n}}+(1-p^{-v})\frac {\leg {\iota k_1}{p}}{\sqrt{p}}\sum^{\infty}_{k_2 = 0} \frac {\sigma_f(p^{2k_2+1})}{p^{(2k_2+1)w+k_2v}}\Big ).
\end{split}
\end{align}

  It follows from the above that
\begin{align}
\label{Jgen}
\begin{split}
 K_{\iota k_1, p}(v,w)=1+O(p^{-1-2w}+p^{-v-2w+\varepsilon}+p^{-1/2-v-w+\varepsilon}).
\end{split}
\end{align}

  Now, consider the case $p \nmid 2al$, $p|k_1$. A short calculation shows
\begin{align}
\label{Jpdivk1}
\begin{split}
 J_{\iota k_1,p}(v,w)
=1+O(p^{-1-2w+\varepsilon}+p^{-v-2w+\varepsilon}+p^{-v+\varepsilon}).
\end{split}
\end{align}

  Next suppose that $p \nmid 2ak_1$, $p|l$. If $l_p \equiv 1 \pmod 2$, then
\begin{align}
\label{Jpl1}
\begin{split}
 J_{\iota k_1,p}(v,w)
=O(p^{-v-w+\varepsilon}+p^{-1/2+\varepsilon}).
\end{split}
\end{align}

  Similarly, if $l_p \equiv 0 \pmod 2$, then
\begin{align}
\label{Jpl2}
\begin{split}
 J_{\iota k_1,p}(v,w)
=O(p^{-v+\varepsilon}+p^{-1/2-v-w+\varepsilon}).
\end{split}
\end{align}

  Lastly, we deal with the case in which $p \nmid 2a$, $p|k_1$ and $p|l$. If $l_p \equiv 1 \pmod 2$, then
\begin{align*}
\begin{split}
 J_{\iota k_1,p}(v,w)
=O(p^{-v-w+\varepsilon}+p^{-1-w+\varepsilon}).
\end{split}
\end{align*}
If $l_p \equiv 0 \pmod 2$, then
\begin{align*}
\begin{split}
 J_{\iota k_1,p}(v,w) =O(p^{-v+\varepsilon}+p^{-1+\varepsilon}).
\end{split}
\end{align*}
  The above estimations imply that for all values of $l_p$, we have
\begin{align}
\label{Jpdivl}
\begin{split}
 J_{\iota k_1, p}(v,w)
\ll p^{-l_p/2}
\end{split}
\end{align}
if $\Re(v) \geq 1+\delta_2$ and $\Re(w)> \delta_1$.  Our discussions above then allow us to write $J_{\iota k_1}(v,w)$ as the right-hand side expression in \eqref{Jk1def} with
\begin{align}
\label{KIexp}
 K_{\iota k_1}(v, w) := \prod_{(p,2)=1}K_{\iota k_1,p}(v, w) \quad \mbox{and} \quad
 I_{\iota k_1,p}(v, w):=  \frac{J_{\iota k_1,p}(v,w)}{K_{\iota k_1,p}(v, w)} \Big(1-\frac {1}{p^{v}}\Big) \Big(1-\frac {\leg {\iota k_1}{p}}{p^{1/2+w}} \Big) \Big(1-\frac {\leg {\iota k_1}{p}\lambda_f(p)}{p^{1/2+w}}+\frac {1}{p^{1+2w}}\Big).
\end{align}

   It follows from \eqref{Jgen} that $K_{\iota k_1}(v, w)$  is absolutely convergent when $\Re(v) \geq 1+\delta_2, \Re(w)> \delta_1$. The estimation for $I_{\iota k_1}(v, w)$ given in \eqref{Iest} now follows from \eqref{Jgen}, \eqref{Jpdivk1} and \eqref{Jpdivl}. This completes the proof of the lemma.
\end{proof}

   The above lemma implies that $J_{\iota k_1}(v,w)$ has a simple pole at $w=1/2$ if and only if $\iota=k_1=1$, in which case we have
\begin{align}
\label{eq:Jk1pole}
  \Res\limits_{w=\frac 12}J_{1}(v,w) = \tfrac 12 \zeta (v) L^{(2)}(1, f )K_{1}(v, \tfrac 12)\prod_{p|al}I_{1,p}(v, \tfrac 12).
\end{align}

   We now define
\begin{align}
\label{Fvl}
    &   \mathcal{F}(v;l):= \sum_{\substack{(a,2l)=1}} \frac{\mu(a)}{a^{2-v}}K_{1}(v, \frac 12)\prod_{p|al}I_{1,p}(v, \frac 12).
\end{align}

  Our last result in this section develops analytic properties of some Dirichlet series related to $\mathcal{F}(v;l)$.
\begin{lemma}
\label{lemma:J1prop}
 With the notation as above, the function $K_{1}(v, 1/2)$ is analytic if
$\Re(v) \geq -1+\varepsilon$.  In this region we have
\begin{align}
\label{eq:Jk1def}
  \prod\limits_{p|al}I_{1,p}(v, \tfrac 12) \ll a^{\varepsilon}l^{-v/2+\varepsilon}.
\end{align}
  Moreover, for $-1+\varepsilon<\Re(v)<1-\varepsilon$,
\begin{align}
\label{Fdecomp}
\begin{split}
\mathcal{F}(v;l)=\mathcal{F}^{\text{gen}}(v)\prod_{p|l}E_{p}(v, \tfrac 12),
\end{split}
\end{align}
  where $\mathcal{F}^{\text{gen}}(v)$ is analytic in the region and we have
\begin{align}
\label{Edef}
\begin{split}
 E_{p}(v, \tfrac 12)=p^{-l_pv/2} \frac{L_{p}(v+1,\sym f)}{p^{l_pv/2}  \zeta_p(1)L_p(1, f) \mathcal{F}^{\text{gen}}_p(v)} \times \left\{ \begin{aligned}
& p^{-1/2+v/2} \Big( 1-\frac {1}{p^{1+v}} \Big) \Big(1+\frac {\lambda_f(p)}{p^{v}}+\frac {1}{p^{1+v}} \Big), \quad p| l_1, \\
& 1+ \frac {1+\lambda_f(p)}{p}+\frac{1}{p^{2+v}}-\frac{1}{p^{2(1+v)}}, \quad p| l, \ p \nmid l_1.
\end{aligned}
\right.
\end{split}
\end{align}

 Also, we have
\begin{align}
\label{sumovera}
\begin{split}
 \mathcal{F}(0;l)=\frac {1}{\sqrt{l_1}}\prod\limits_{p\mid l}\Big ( \frac{p}{p+1} \Big ) L^{(2)}(1,\sym f)\frac {\mathcal H(1)}{\zeta^{(2)}(2)}\prod_{p|l}Q_p(1;l).
\end{split}
\end{align}
\end{lemma}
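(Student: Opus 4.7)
My plan is to leverage the local analysis already established in Lemma \ref{lemma:Jprop}, where the factors $K_{1,p}(v,w)$ and $I_{1,p}(v,w)$ are defined via \eqref{Kdefgen} and \eqref{KIexp}, and to push the analytic continuation into a larger strip in $v$ at the specific point $w = 1/2$. For the analyticity of $K_1(v,1/2)$ in $\Re(v) \geq -1+\varepsilon$, I would specialise \eqref{Kdefgen} at $w=1/2$ and expand the local factor in powers of $p^{-1}$ using the explicit evaluations \eqref{nongenexpZ} (applied at $s = 1+v$). A direct computation shows that the leading correction in $K_{1,p}(v,1/2)$ equals $\lambda_f(p^2)/p^{1+v}$, matching the local factor of $L_p(1+v,\sym f)$. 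Writing $K_1(v,1/2) = L^{(2)}(1+v,\sym f)\cdot R(v)$ --- and, if necessary, factoring out one further standard $L$-function to neutralise the residual $p^{-2-2v}$ error --- leaves an Euler product absolutely convergent for $\Re(v) > -1+\varepsilon$. Since $L(s,\sym f)$ is entire by the Shimura result recalled in Section \ref{sec:cusp form}, the analyticity of $K_1(v,1/2)$ in the claimed region follows. The bound \eqref{eq:Jk1def} is handled prime by prime: for $p\mid a$, \eqref{Jpexpa} gives $J_{1,p}(v,1/2) = (1-p^{-v})^{-1}$, so the corresponding $I_{1,p}(v,1/2)$ is bounded uniformly in the strip; for $p\mid l$, I specialise \eqref{Jgenexp} to $w=1/2$ and evaluate the two resulting sums via \eqref{nongenexpZ}, observing that both summands carry a factor $p^{-l_p/2}$ and combine within the strip to give $I_{1,p}(v,1/2) \ll p^{-l_p v/2 + \varepsilon}$.

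The decomposition \eqref{Fdecomp} then follows by splitting $\prod_{p|al} I_{1,p} = \prod_{p|a} I_{1,p}\cdot\prod_{p|l} I_{1,p}$ (legitimate since $(a,2l)=1$) in \eqref{Fvl}, pulling the $l$-dependent factor out of the sum over $a$, and recognising the remaining $a$-sum as the Euler product
\[
\sum_{(a,2l)=1}\frac{\mu(a)\prod_{p|a}I_{1,p}(v,1/2)}{a^{2-v}} \;=\; \prod_{p\nmid 2l}\!\left(1 - \frac{I_{1,p}(v,1/2)}{p^{2-v}}\right),
\]
which is absolutely convergent for $\Re(v) < 1-\varepsilon$. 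I would then define $\mathcal F^{\text{gen}}(v)$ to equal $K_1(v,1/2)$ multiplied by the Euler product above with the \emph{generic} local factors (the ones valid for $p\nmid 2al$) inserted also at the primes $p\mid l$; by construction $E_p(v,1/2)$ is the ratio of the true to the generic $p$-local factor at $p\mid l$, and a direct computation from \eqref{Jgenexp} --- distinguishing the cases $l_p\equiv 1$ and $l_p\equiv 0\pmod 2$ --- matches the result with the explicit formula \eqref{Edef}.

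The main obstacle is the explicit evaluation \eqref{sumovera} at $v = 0$. All the work so far delivers $\mathcal F(0; l)$ as a product of Euler factors that must be matched, prime by prime, against the target expression $\frac{1}{\sqrt{l_1}}\prod_{p|l}\frac{p}{p+1}L^{(2)}(1,\sym f)\frac{\mathcal H(1)}{\zeta^{(2)}(2)}\prod_{p|l}Q_p(1;l)$ stated in terms of the functions from Lemma \ref{lemma:DS4P}. The strategy is: at each $p\nmid 2l$, rewrite the local factor of $K_1(0,1/2)$ together with $1 - I_{1,p}(0,1/2)/p^2$ using \eqref{nongenexpZ} so that $L_p(1,\sym f)$, $L_p(1,f)$, $\zeta_p(2)$ and the factor $H_p(1)$ of Lemma \ref{lemma:DS4P} all appear explicitly; and at each $p\mid l$, simplify $E_p(0,1/2)$ to reveal the factor $p^{-1/2}$ from the primes $p\mid l_1$ (giving $1/\sqrt{l_1}$) together with the precise shape of $Q_p(1;l)$. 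The challenge is entirely combinatorial: several Euler products with slightly different missing local factors (at $p = 2$ versus at $p\mid l$) must be reorganised so that every cancellation aligns exactly.
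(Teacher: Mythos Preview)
Your proposal is correct and follows essentially the same route as the paper: specialise \eqref{Kdefgen} at $w=1/2$, factor out $L^{(2)}(1+v,\sym f)$ from $K_1(v,1/2)$ (the paper's direct computation \eqref{K1pexp} shows the remaining error is $O(p^{-2}+p^{-2-v})$, so your hedge about an additional $L$-function is unnecessary), bound $I_{1,p}(v,1/2)$ prime by prime via \eqref{Jpexpa} and \eqref{Jgenexp}, and then organise $\mathcal F(v;l)$ as the Euler product \eqref{FvEuler} with the generic factor $\mathcal F^{\text{gen}}_p(v)$ inserted and corrected at $p\mid l$. The evaluation \eqref{sumovera} is, as you say, a prime-by-prime matching of the explicit local factors at $v=0$ against those of Lemma~\ref{lemma:DS4P}, which the paper carries out via \eqref{KIprodevl}--\eqref{KIprodevl2}.
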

\begin{proof}
   We keep the notations in the proof of Lemma \ref{lemma:Jprop} and deduce from  \eqref{Kdefgen} that
\begin{align}
\label{Jgenexp1}
\begin{split}
 K_{1,p}(v,\tfrac 12) =& \Big(1- \frac{1}{p^v} \Big) \Big(1-\frac{1}{p} \Big) \Big(1-\frac {\lambda_f(p)}{p}+\frac {1}{p^{2}} \Big)  J_{1,p}(v,\tfrac 12) \\
=&  \Big(1-\frac{1}{p} \Big) \Big(1-\frac {\lambda_f(p)}{p}+\frac {1}{p^{2}} \Big) \Big ( \sum_{n \equiv 0 \pmod 2} \frac {\sigma_f(p^n)}{p^{n(v+1)/2}}\frac {\varphi(p^{n})}{p^{n}}+ \Big( 1-\frac{1}{p^v} \Big) \frac {1}{p}\sum^{\infty}_{k_2 = 0} \frac {\sigma_f(p^{2k_2+1})}{p^{k_2(1+v)}} \Big ).
\end{split}
\end{align}

  We now apply \eqref{alpha} and \eqref{nongenexpZ} to evaluate the last expression in \eqref{Jgenexp1}.  This lead to
\begin{align}
\label{K1pexp}
\begin{split}
 K_{1,p}(v, \tfrac 12)= L_{p}(v+1,\sym f) \Big(1+O \Big(\frac{1}{p^2}+\frac{1}{p^{2+v}} \Big) \Big).
\end{split}
\end{align}
This readily yields the first assertion of the lemma. \newline

   We next deduce from \eqref{Jpexpa}, \eqref{KIexp} and \eqref{K1pexp} that
\begin{align*}
\begin{split}
\prod\limits_{p|al}I_{1,p}(v, \tfrac 12) \ll  (al)^{\varepsilon}\prod\limits_{p|l} \Big(1-\frac {1}{p^{v}} \Big)J_{1,p}(v,\tfrac 12).
\end{split}
\end{align*}

   Similar to the derivations of \eqref{Jpl1} and \eqref{Jpl2}, we see that when $-1+\varepsilon<v \leq 0$,
\begin{align*}
\begin{split}
 J_{1,p}(v,\tfrac 12) \Big(1-\frac {1}{p^{v}}\Big) = \left\{\begin{array}{cl}
    O(p^{-(1+l_p)v/2-1/2+\varepsilon}+p^{-(l_p-1)v/2-1/2+\varepsilon}\max (1, p^{-v})), & \mbox{if $l_p \equiv 1 \pmod 2$}, \\ \\
    O(p^{-l_pv/2+\varepsilon}+p^{-1-l_pv/2+\varepsilon}\max (1, p^{-v})), & \mbox{if $l_p \equiv 0 \pmod 2$}.
    \end{array}\right.
\end{split}
\end{align*}

 In either case, we conclude that when $v>-1+\varepsilon$,
\begin{align*}
\begin{split}
 J_{1,p}(v,\tfrac 12) \Big(1-\frac {1}{p^{v}} \Big)  \ll p^{-l_pv/2+\varepsilon}.
\end{split}
\end{align*}
  The estimation given in \eqref{eq:Jk1def} now follows from this and \eqref{KIexp}.

  To establish \eqref{Fdecomp}, we first note that
\begin{align}
\label{FvEuler}
\begin{split}
 \mathcal{F}(v;l) =& K_{1}(v, \tfrac 12)\prod_{p|l}I_{1,p}(v, \tfrac 12)\prod_{(p, 2l)=1} \Big(1-\frac 1{p^{2-v}}I_{1,p}(v, \tfrac 12) \Big) \\
=& \prod_{p|l}\big (K_{1,p }(v, \tfrac 12)I_{1,p}(v, \tfrac 12)\big )\prod_{(p, 2l)=1}\Big(K_{1,p }(v, \tfrac 12)-\frac 1{p^{2-v}}I_{1,p}(v, \tfrac 12) \Big ) \\
=: & \prod_{p|l}\big (K_{1,p }(v, \tfrac 12)I_{1,p}(v, \tfrac 12)\big )\prod_{(p, 2l)=1}\mathcal{F}^{\text{gen}}_p(v),
\end{split}
\end{align}
  where
\begin{align*}
\begin{split}
 \mathcal{F}^{\text{gen}}_p(v)=\Big(1-\frac {\lambda_f(p)}{p}+\frac {1}{p^{2}} \Big) \Big(1-\frac{1}{p} \Big) \Big( \sum_{n \equiv 0 \pmod 2} \frac {\sigma_f(p^n)}{p^{n(v+1)/2}}\frac {\varphi(p^{n})}{p^{n}}+ \Big(1- \frac{1}{p^v} \Big) \frac {1}{p}\sum^{\infty}_{k_2 = 0} \frac {\sigma_f(p^{2k_2+1})}{p^{k_2(1+v)}}-\frac 1{p^{2-v}} \Big  ).
\end{split}
\end{align*}

Applying \eqref{alpha} and \eqref{nongenexpZ} to evaluate $\mathcal{F}^{\text{gen}}_p(v)$ renders
\begin{align}
\label{Fgenest}
\begin{split}
 \mathcal{F}^{\text{gen}}_p(v)=& \frac{L_{p}(v+1,\sym f)}{\zeta_p(1)L_{p}(1, f)}  \Big ( \Big(1-\frac{1}{p} \Big) \Big(1+\frac {1+\lambda_f(p)}{p^{1+v}} \Big)  \\
& \hspace*{2cm} + \Big( \frac 1p-\frac 1{p^{2-v}} \Big) L^{-1}_{p}(v+1,\sym f) + \frac {1}{p} \Big(1- \frac{1}{p^v} \Big)  \Big( 1+\lambda_f(p)+\frac{1}{p^{1+v}} \Big)\Big ) \\
=& \frac{L_{p}(v+1,\sym f)}{\zeta_p(1)L_{p}(1, f)} \Big (1+\frac {1+\lambda_f(p)}{p}+O\Big( \frac{1}{p^{2+v}}+\frac{1}{p^{2-v}} \Big)\Big ) = L_{p}(v+1,\sym f)\Big (1+O \Big( \frac{1}{p^2}+\frac{1}{p^{2+v}}+\frac{1}{p^{2-v}} \Big)\Big ).
\end{split}
\end{align}
It follows from this that $\mathcal{F}^{\text{gen}}(v):=\prod\limits_{(p, 2)=1}\mathcal{F}^{\text{gen}}_p(v)$ is analytic in the region $-1+\varepsilon<\Re(v)<1-\varepsilon$. \newline

Next \eqref{KIexp} yields
\begin{align}
\label{KIprod}
\begin{split}
 \prod_{p|l}\big (K_{1,p }(v, \tfrac 12)I_{1,p}(v, \tfrac 12)\big )=\prod_{p|l}\Big ( \Big(1-\frac {\lambda_f(p)}{p}+\frac {1}{p^{2}} \Big) \Big(1-\frac 1p \Big) \Big(1-\frac 1{p^v})J_{1,p}(v,\tfrac 12 \Big)\Big ).
\end{split}
\end{align}

Now \eqref{Jgenexp} gives that if $l_p \equiv 1 \pmod 2$, then
\begin{align}
\label{KIprodevl}
\begin{split}
 & \Big(1-\frac 1{p^v}\Big)J_{1,p}(v,\tfrac 12)
=  p^{-1/2-(l_p-1)v/2}L_{p}(v+1,\sym f) \Big(1-\frac {1}{p^{1+v}} \Big) \Big(1+\frac {\lambda_f(p)}{p^{v}}+\frac {1}{p^{1+v}} \Big).
\end{split}
\end{align}

  Also, if $l_p \equiv 0 \pmod 2$, we have
\begin{align}
\label{KIprodevl2} \begin{split}
 \Big(1-\frac 1{p^v} \Big)J_{1,p}(v,\tfrac 12)
=& p^{-l_pv/2}L_{p}(v+1,\sym f) \Big(1+ \frac {1+\lambda_f(p)}{p}+\frac{1}{p^{2+v}}-\frac{1}{p^{2(1+v)}} \Big).
\end{split}
\end{align}

  We readily deduce the expression given in \eqref{Edef} for $E_p(v,\frac 12)$ as well as the expression for $\mathcal F(0;l)$ given in \eqref{sumovera} from \eqref{FvEuler}--\eqref{KIprodevl2}, via direct computations. This completes the proof of the lemma.
\end{proof}

\section{Proof of Theorem \ref{theo:1stmoment}}
\label{sec 3}

\subsection{A first decomposition}
\label{section:mainprop}

From Lemma \ref{lem:AFE}, we get
\[
\sumstar_{(d,2)=1} L(\tfrac{1}{2}, \chi_{8d})L(\tfrac{1}{2},f \otimes \chi_{8d})\chi_{8d}(l) = 2\sumstar_{(d,2)=1} \sum_{n} \frac { \chi_{8d}(nl)\sigma_f(n)}{n^{1/2}}\Phi \left(\frac {d}{X} \right )V\Big( \frac n{d^{3/2}} \Big):=S(l).
\]

We apply the M\"{o}bius inversion to remove the square-free condition on $d$ to obtain that, for an appropriate parameter $Z$ to be optimized later,
\begin{eqnarray*}
 S(l) = \Big(\sum_{\substack{a \leq Z \\ (a,2l)=1}} + \sum_{\substack{a > Z \\ (a,2l)=1}} \Big) 2 \mu(a)  \sum_{(d,2)=1} \sum_{(n,a)=1} \frac { \chi_{8d}(nl)\sigma_f(n)}{n^{1/2}}\Phi \left(\frac {da^2}{X} \right )V \Big( \frac n{(a^2d)^{3/2}} \Big) : =S_1(l)+ S_2(l).
\end{eqnarray*}

\subsection{Estimating $S_2(l)$}
\label{section:S2}

  We estimate $S_2(l)$ by writing $d=b^2 \ell$ with $\ell$ being square-free, and
group terms according to $c=ab$ to deduce that
\begin{equation*}
 S_2(l) = 2\sum_{(c,2l)=1} \sum_{\substack{a > Z \\ a|c}} \mu(a)
 \sumstar_{\ell} \sum_{(n,c)=1}
  \frac { \chi_{8\ell}(nl)\sigma_f(n)}{n^{1/2}} \Phi \left(\frac {\ell c^2}{X} \right )V \Big(\frac n{(c^2\ell)^{3/2}} \Big).
\end{equation*}

Inserting the definition of $V$ in \eqref{eq:Vdef}, we arrive at
\begin{align} \label{eq:S22}
\begin{split}
 S_2(l) =& 2\sum_{(c,2l)=1} \sum_{\substack{a > Z \\ a|c}} \mu(a)
 \sumstar_{\ell}  \frac{1}{2 \pi i} \int\limits\limits_{(2)} \frac{W(s)}{s} w(s) \sum_{(n,c)=1}
  \frac { \chi_{8\ell}(nl)\sigma_f(n)}{n^{1/2+s}} \Phi \left(\frac {\ell c^2}{X} \right )(c^2\ell)^{3s/2} \dif s \\
=& 2\sum_{(c,2l)=1} \sum_{\substack{a > Z \\ a|c}} \mu(a)
 \sumstar_{\ell}  \frac{\chi_{8\ell}(l)}{2 \pi i} \int\limits\limits_{(2)} \frac{W(s)}{s} w(s)L^{(c)}(\tfrac 12+s, \chi_{8\ell})L^{(c)}(\tfrac 12+s, f \otimes \chi_{8\ell}) \Phi \left(\frac {\ell c^2}{X} \right )(c^2\ell)^{3s/2} \dif s.
\end{split}
\end{align}

 We now move the line of integration in \eqref{eq:S22} to $\Re(s)=1/\log X$ without encountering any pole.  On the new line of integration, we have
\begin{align}
\label{Linversebound}
L^{(c)}(\tfrac 12+s, \chi_{8\ell})L^{(c)}(\tfrac 12+s, f \otimes \chi_{8\ell})|
\le d(c)^2 ( |L(\tfrac 12+s, \chi_{8\ell})|^2 + |L(\tfrac 12+s, f \otimes \chi_{8\ell})|^2),
\end{align}
 where we recall that $d(c)$ denotes the number of divisors of $c$. \newline

  We apply \eqref{Linversebound}  in \eqref{eq:S22} and make use of Lemma \ref{lem:HB} to produce the bound
\begin{align} \label{S2}
  S_2(l) \ll \sum_{c} \sum_{\substack{a > Z \\ a|c}}   d(c)^2 \int\limits_{-\infty}^{\infty} (1+|t|)^{-4} \sumstar_{\ell\ll X/c^2}
\Big (\Big|L(\tfrac 12+\tfrac 1{\log X}+it,  \chi_{8\ell})\Big|^{2} + \Big|L(\tfrac 12+\tfrac 1{\log X}+it,  f \otimes \chi_{8\ell})\Big|^{2} \Big ) \ \dif t \ll  \frac {X^{1+\varepsilon}}{Z}.
\end{align}

\subsection{Estimating $S_1(l)$, the main term}
Now, we apply the Poisson summation formula in Lemma \ref{lem2} to deduce that
\begin{align}
\label{eq:S1}
 S_1(l)= X \sum_{n} \frac {\sigma_f(n)}{n^{1/2}} \frac {1}{nl}\left( \frac {16}{nl} \right)
   \sum_{\substack{a \leq Z \\ (a,2nl)=1}} \frac{\mu(a)}{a^2} \sum_k(-1)^kG_k(nl)\widetilde{\Phi}_n\left( \frac {kX}{2a^2 nl} \right), \quad \mbox{where} \quad   \Phi_y(t) =\Phi(t)V \Big( \frac y{(Xt)^{3/2}} \Big).
\end{align}

We write $S_{10}(l)$ for the $k=0$ term in \eqref{eq:S1}. Note that
$$
\sum_{\substack{a \leq Z \\ (a,2nl)=1}} \frac{\mu(a)}{a^2} = \frac{1}{\zeta(2)}
\prod_{p|2nl} \left( 1-\frac{1}{p^2}\right)^{-1} +O(Z^{-1}) = \frac{1}{\zeta^{(2)}(2)}\prod_{p|nl} \left(1-\frac{1}{p^2}\right)^{-1}+ O \Big( \frac{1}{Z} \Big) .
$$
 Moreover, we note that Lemma \ref{lem:Gauss} implies that $G_0(ln)=\phi(ln)$ if $ln=\Box$ and $G_0(ln)=0$ otherwise. Recall that $l=l_1l_2^2$ with $l_1$ square-free.  The condition $ln=\Box$ is thus equivalent to $n=l_1m^2$ for some integer $m$. It follows from the above discussions and \eqref{eq:S1} that
\begin{align*}
 S_{10}(l)= \frac{X}{\zeta^{(2)}(2)\sqrt{l_1}}\sum_{\substack{m=1 \\ (m,2)=1 }}^\infty\frac{\sigma_f(l_1m^2)}{m} \widetilde{\Phi}_{l_1m^2}(0) \prod_{p\mid lm}\frac{p}{p+1} +O\Big( \frac{X}{Z\sqrt{l_1}}\sum_{\substack{m=1 \\ (m,2)=1 }}^\infty\frac{|\sigma_f(l_1m^2)|}{m}|\widetilde{\Phi}_{l_1m^2}(0)| \Big).
\end{align*}

  Notice that as $\Phi(t)$ is supported in $\mr^+$, we have for any $c>0$,
\begin{align} \label{phi0}
\begin{split}
 \widetilde{\Phi}_{l_1m^2}(0)&=\int\limits_0^\infty \Phi(t) V \Big(\frac{l_1m^2}{(Xt)^{3/2}} \Big) \dif t =\frac{1}{2\pi i}\int\limits_{(c)}w(s) \Big(\frac{X^{3/2}}{l_1m^2}\Big)^s \Big(\int\limits_0^\infty\Phi(t)t^{3s/2} \dif t \Big)\frac{W(s) \dif s}{s}\\
&=\frac{1}{2\pi i}\int\limits_{(c)}w(s)\Big( \frac{X^{3/2}}{l_1m^2} \Big)^s \widehat{\Phi}(\tfrac{3s}{2}+1) \frac{ W(s) \dif s}{s}.
\end{split}
\end{align}

 We apply the first equality above and \eqref{Vest} which implies that we may assume that $l_1m^2 \ll X^{3/2+\varepsilon}$, so that
\begin{align*}
\label{eqn:Perror}
\begin{split}
\sum_{\substack{m=1 \\ (m,2)=1 }}^\infty\frac{|\sigma_f(l_1m^2)|}{m}|\widetilde{\Phi}_{l_1m^2}(0)|&\ll \sum_{m^2\ll X^{3/2+\varepsilon}/l_1}\frac{|\sigma_f(l_1m^2)|}{m}\ll \sum_{m^2\ll X^{3/2+\varepsilon}/l_1}\frac{(l_1m^2)^{\varepsilon}}{m}
\ll (l_1X)^{\varepsilon}.
\end{split}
\end{align*}

   This implies that
\begin{align*}
\begin{split}
S_{10}(l) =&  \frac{
X}{\zeta^{(2)}(2)\sqrt{l_1}}\sum_{\substack{m=1 \\ (m,2)=1 }}^\infty\frac{\sigma_f(l_1m^2)}{m} \widetilde{\Phi}_{l_1m^2}(0) \prod_{p\mid lm}\frac{p}{p+1} + O\left( \frac{X(l_1X)^{\varepsilon}}{Z\sqrt{l_1}}\right).
\end{split}
\end{align*}

  We then deduce from \eqref{phi0} that
\begin{equation}
\label{eqn:4.4}
S_{10}(h)= \frac{
X}{\zeta^{(2)}(2)\sqrt{l_1}}\int\limits_{(c)}w(s) \Big( \frac{X^{3/2}}{l_1} \Big)^s \mathcal Z(1+2s,l)\widehat{\Phi}(\tfrac{3s}{2}+1) \frac{ W(s) \dif s}{s}+ O\left( \frac{X(l_1X)^{\varepsilon}}{Z\sqrt{l_1}}\right),
\end{equation}
   where $\mathcal Z(s,l)$ is defined in \eqref{Zdef}. \newline

   Note that $\zeta(s)$, $L(s,f)$ and $L(s, \sym f)$ are $L$-functions of degree $1$, $2$ and $3$, respectively. The convexity bound (see \cite[exercise 3, p. 100]{iwakow}) then implies that for $0< \Re(s)<1$,
\begin{align*}
   \zeta(s) L(s,\sym f)L(s, f)\ll (1+|s|^6)^{(1-\Re(s))/2+\varepsilon}.
\end{align*}
  This together with the rapid decay of $w(s)$ on vertical strips given in \eqref{wsdecay} and the estimation that
  \[ |\prod\limits_{p|l}Q_p(s;l)| \ll\ l^{\varepsilon}\]
   for $\Re(s) > 1/2+\varepsilon$ in Lemma \ref{lemma:DS4P} allows us to see that upon moving the line of integration in \eqref{eqn:4.4} to $\Re(s) = -1/4+\varepsilon$, the contribution of the integration on the new line is
\begin{align}
\label{EforS10}
  \ll \frac {X^{5/8+\varepsilon}l^{\varepsilon}}{\sqrt{l_1}}.
\end{align}

  Also, we encounter a double pole at $s=0$ by Lemma \ref{lemma:DS4P}, whose residue contributes
\begin{align*}
\begin{split}
  \frac{X \widehat{\Phi}(1)}{4\zeta^{(2)}(2)\sqrt{l_1}} \prod\limits_{p\mid l}\Big ( \frac{p}{p+1} \Big )L^{(2)}(1,\sym f)L^{(2)}(1, f)\mathcal H(1)\prod_{p|l}Q_p(1;l) \Big (\log \frac {X^{3/2} }{l_1}+C_1+ 2\sum_{p|l}\frac {Q'_p(1;l)}{Q_p(1;l)} \Big ),
\end{split}
\end{align*}
  where $C_1$ is a constant depending on $\Phi(1)$ and $\Phi'(1)$ only. \newline

  We now apply Lemma \ref{lemma:DS4P} to see that the above expression can be written as
\begin{align}
\label{ResS10}
\begin{split}
  \frac{X \widehat{\Phi}(1)}{4\zeta^{(2)}(2)\sqrt{l_1}} \prod\limits_{p\mid l}\Big ( \frac{p}{p+1} \Big ) & L^{(2)}(1,\sym f)L^{(2)}(1, f)\mathcal H(1)\prod_{p|l}Q_p(1;l) \\
& \hspace{0.5in} \times  \Big (\log \frac {X^{3/2} }{l_1}+C_1-2\sum_{p|l_1}\frac { \log p/p}{1+\lambda_f(p)+1/p}+ 2\sum_{p|l_1}\frac {D_1(p) \log p}{p} \Big ),
\end{split}
\end{align}
 where $D_1(p) \ll 1$. \newline

  We conclude from \eqref{eqn:4.4}-\eqref{ResS10} that
\begin{align} \label{S10}
\begin{split}
S_{10}(h)
 =  & \frac{X \widehat{\Phi}(1)}{4\zeta^{(2)}(2)\sqrt{l_1}} \prod\limits_{p\mid l}\Big ( \frac{p}{p+1} \Big )L^{(2)}(1,\sym f)L^{(2)}(1, f)\mathcal H(1)\prod_{p|l}Q_p(1;l) \\
& \hspace*{1cm} \times \Big (\log \frac {X^{3/2} }{l_1}+C_1-2\sum_{p|l_1}\frac { \log p/p}{1+\lambda_f(p)+1/p}+ 2\sum_{p|l_1}\frac {D_1(p) \log p}{p} \Big ) + O\left(  \frac{X(l_1X)^{\varepsilon}}{Z\sqrt{l_1}}+\frac {X^{5/8+\varepsilon}l^{\varepsilon}}{\sqrt{l_1}} \right).
\end{split}
\end{align}

\subsection{Estimating $S_1(l)$, the $k \neq 0$ terms}
\label{section:3.3}
Let $S_3(l)$ denote the contribution to $S_1(l)$ of the terms  $k \neq 0$ in \eqref{eq:S1} so that
\begin{align}
\label{eq:S3}
 S_3(l)= X \sum_{n} \frac {\sigma_f(n)}{n^{1/2}} \frac {1}{nl}\left( \frac {16}{nl} \right)
   \sum_{\substack{a \leq Z \\ (a,2nl)=1}} \frac{\mu(a)}{a^2} \sum_{k \neq 0}(-1)^kG_k(nl)\widetilde{\Phi}_n\left( \frac {kX}{2a^2 nl} \right),
\end{align}
 where
\begin{align*}
\begin{split}
  \widetilde{\Phi}_n\left(y \right)=& \int\limits^{\infty}_{0}\left(\cos +\sin \right) (2\pi  xy) \Phi_n(x) \dif x=\int\limits^{\infty}_{0}\left(\cos +\sin  \right) (2\pi  xy)\Phi(x)V \Big( \frac n{(Xx)^{3/2}} \Big) \dif x \\
=& \frac{1}{2 \pi i} \int\limits^{\infty}_{0}\left(\cos +\sin  \right) (2\pi  xy)\Phi(x) \int\limits\limits_{(2)} \frac{W(s)}{s} w(s) \Big (\frac {(Xx)^{3/2}}{n} \Big )^{s} \dif s \dif x.
\end{split}
\end{align*}

  Further note that by Mellin inversion, we have
\begin{align*}
  \Phi(x) = \frac{1}{2 \pi i} \int\limits\limits_{(2)} \widehat\Phi(1+u)x^{-u-1} \dif u.
\end{align*}

  It follows that
\begin{align*}
\begin{split}
  \widetilde{\Phi}_n\left(y \right)=& \Big (\frac{1}{2 \pi i}\Big )^2
\int\limits^{\infty}_{0}\left(\cos +\sin  \right) (2\pi  x) \int\limits\limits_{(2)}\int\limits\limits_{(2)} \frac{W(s)}{s} w(s) \Big (\frac {(Xx)^{3/2}}{n} \Big )^{s} \widehat\Phi(1+u)x^{-u-1} \dif s \dif u  \dif x.
\end{split}
\end{align*}

  Upon a change of variable $x \longrightarrow  x/(2\pi |y|)$, we recast the above as
\begin{align*}
\begin{split}
  \widetilde{\Phi}_n\left(y \right)=&
\Big (\frac{1}{2 \pi i}\Big )^2 \int\limits^{\infty}_{0}\left(\cos (x)+\text{sgn}(y) \sin(x) \right)  \int\limits\limits_{(2)}\int\limits\limits_{(2)} \frac{W(s)}{s} w(s) \Big (\frac {Xx}{2\pi |y|}\Big )^{3s/2}n^{-s} \widehat\Phi(1+u)\big (\frac {x}{2\pi |y|}\big )^{-u} \dif s \dif u \frac {\dif x}{x} .
\end{split}
\end{align*}

For $0<\Re (w)<1$, we have (see \cite[(5.8)]{Young2})
\begin{align*}
\begin{split}
  \int\limits^{\infty}_0\cos(x)x^w\frac {\dif x}{x}=\Gamma(w)\cos \big( \frac {\pi w}{2} \big),
\end{split}
\end{align*}
  and this identity also holds with $\cos x$ replaced by $\sin x$. It follows that
\begin{align*}
\begin{split}
  \widetilde{\Phi}_n\left(y \right)=&
  \Big (\frac{1}{2 \pi i}\Big )^2 \int\limits\limits_{(2)}\int\limits\limits_{(2)} \frac{W(s)}{s n^s} w(s) X^{3s/2} \widehat\Phi(1+u)\Big (\frac {1}{2\pi |y|} \Big)^{3s/2-u}\Gamma \Big( \frac {3s}{2}-u \Big)\left(\cos+\text{sgn}(y) \sin \right) \Big( \frac {\pi}{2}(\frac {3s}{2}-u) \Big)  \dif s \dif u.
\end{split}
\end{align*}

   Applying the above expression in \eqref{eq:S3} allows us to deduce
that
\begin{align*}
\begin{split}
 S_3(l)
=  X \sum_{\substack{a \leq Z \\ (a,2l)=1}} \frac{\mu(a)}{a^2}\Big (\frac{1}{2 \pi i}\Big )^2 \int\limits\limits_{(2)}\int\limits\limits_{(2)} & \frac{W(s)}{s} w(s) X^{u} \widehat\Phi(1+u)\Big (\frac {\alpha^2 l}{\pi}\Big )^{3s/2-u}\Gamma \Big(\frac {3s}{2}-u\Big) \left(\cos +\text{sgn}(k)\sin \right) \Big(\frac {\pi}{2} \Big(\frac {3s}{2}-u \Big) \Big) \\
& \times \sum_{k \neq 0} \sum_{(n, 2a)=1} \frac {\sigma_f(n)}{n^{1/2+u-s/2}|k|^{3s/2-u}}  \frac {(-1)^kG_{k}(nl)}{nl}   \dif s \dif u.
\end{split}
\end{align*}

    Now, we write $k = \iota k_1k^2_2$ with $\iota \in \{ \pm 1 \}$ and $k_1>0$ being square-free.  Set $f(k)=G_{\iota k}(nl)/|k|^{z}$, where $z=3s/2-u$. It follows from  \cite[(5.15)]{Young2} that
\begin{align*}
   \sum^{\infty}_{k=1}(-1)^{k} f(k)= (2^{1-2z}-1)\sumstar_{\substack{k_1  \geq 1 \\ (k_1, 2) = 1}}
   \sum^{\infty}_{k_2=1}f(k_1k^2_2) +\sumstar_{\substack{k_1 \geq 1 \\ 2| k_1}}\sum^{\infty}_{k_2=1}f(k_1k^2_2).
\end{align*}

    We apply the above relation to recast $S_3(l)$ as
\begin{align}
\label{S3exp}
     S_3(l) &=   X \sum_{\substack{a \leq Z \\ (a,2l)=1}} \frac{\mu(a)}{a^2}
\sum_{\iota =\pm 1} \left (  \ \sumstar_{\substack{k_1 \geq 1 \\ (k_1, 2) = 1}} \frac {\mathcal{M}^{\iota}_{1}(s,u,k_1,l)}{k_1^{3s/2-u}}
+ \sumstar_{\substack{k_1 \geq 1 \\ 2|k_1 }}\frac {\mathcal{M}^{\iota}_{2}(s,u, k_1,l)}{k_1^{3s/2-u}} \right ),
\end{align}
    where
\begin{align}
\label{eq:M1exp}
\begin{split}
 \mathcal{M}^{\iota}_{1}(s,u, k_1,l) = \Big (\frac{1}{2 \pi i}\Big )^2 \int\limits\limits_{(2)}\int\limits\limits_{(2)} & \frac{W(s)}{s} w(s) X^{u} \widehat\Phi(1+u)\Big (\frac {\alpha^2 l}{\pi k_1}\Big )^{3s/2-u}\Gamma \Big( \frac {3s}{2}-u \Big)(2^{1-3s+2u}-1)  \\
& \times \left(\cos +\iota \sin \right)\Big( \frac {\pi}{2} \Big( \frac {3s}{2}-u \Big) \Big)  J_{\iota k_1} \Big(3s-2u,\frac{1}{2}+u-\frac{s}{2} \Big) \dif s \dif u,
\end{split}
\end{align}
  with $J_{\iota k_1}$ defined in \eqref{eq:Jdef}.  The expression for $\mathcal{M}^{\iota}_{2}(s,u,k_1,l)$ is identical to that of  $\mathcal{M}^{\iota}_{1}(s,u, k_1,l)$ given in  \eqref{eq:M1exp} except that the factor $2^{1-3s+2u}-1$ is omitted. \newline

  We now apply Lemma \ref{lemma:Jprop} to see that
\begin{align*}
\begin{split}
 \mathcal{M}^{\iota}_{1}(s,u, k_1,l) = \Big (\frac{1}{2 \pi i}\Big )^2 \int\limits\limits_{(2)}\int\limits\limits_{(2)} & \frac{W(s)}{s} w(s) X^{u} \widehat\Phi(1+u) \Big (\frac {\alpha^2 l}{\pi k_1}\Big )^{3s/2-u}\Gamma \Big( \frac {3s}{2}-u \Big) \\
& \times \left(\cos +\iota \sin\right) \Big( \frac {\pi}{2} \Big( \frac {3s}{2}-u \Big) \Big) \zeta(3s-2u)L^{(2)} \Big(1+u-\frac{s}{2}, \chi_{\iota k_1} \Big) L^{(2)} \Big(1+u-\frac{s}{2}, f \otimes \chi_{\iota k_1} \Big) \\
& \times K_{\iota k_1} \Big( 3s-2u, \frac{1}{2}+u-\frac{s}{2} \Big) \prod_{p|al}I_{\iota k_1,p} \Big( 3s-2u, \frac{1}{2}+u-\frac{s}{2} \Big) \dif s \dif u.
\end{split}
\end{align*}

  We move the contours to $c_s = 1/2 + \varepsilon$ and $c_u =1/4+\varepsilon$ such that $1+c_u-c_s/2 = 1+\varepsilon/2, 3c_s-2c_u=1+\varepsilon, 1/2+c_u-c_s/2 = 1/2+\varepsilon/2$.  Mindful of Lemma \ref{lemma:Jprop},  we encounter no pole in this process and $K_{\iota k_1}$ remains analytic in the process.  Next, we move $c_u$ to $-1/4+\varepsilon$ to cross a simple pole of the Dirichlet $L$-function $L^{(2)}(1+u-s/2, \chi_{\iota k_1})$ at $u = s/2$ for $\iota = k_1 = 1$ only. \newline

   Note that integration by parts implies that for any integer $E \geq 0$,
\begin{align}
\label{h1bound}
 \widehat \Phi(w) \ll  \frac{1}{(1+|w|)^{E}}.
\end{align}

Recall the estimate (see \cite[p. 1107]{S&Y}) that
\begin{equation*}
 |\Gamma(s) (\cos+\iota \sin)(\frac {\pi s}{2})| \ll |s|^{\Re(s)- 1/2}.
\end{equation*}

  It follows from the above, \eqref{h1bound} and Lemma \ref{lem:HB} that by arguing as in Section 5.3 of \cite{Young2} that the sum over $k_1$ converges absolutely on the new lines of integration and that with our choices of $c_u$ and $c_s$, the contribution to $S_3(l)$ from these lines of integration is
\begin{align}
\label{Errknonzero}
  \ll \sum_{a \leq Z} a^{-2} (l a^2)^{1 + \varepsilon} l^{-1/2 + \varepsilon} X^{3/4 + \varepsilon}
\ll l^{1/2 + \varepsilon} Z X^{3/4 + \varepsilon}.
\end{align}

  On the other hand, the residue of the pole contributes
\begin{align}
\label{residue-firstexp}
    &   \frac{X}{2 \pi i} \sum_{\substack{a \leq Z \\ (a,2l)=1}} \frac{\mu(a)}{a^2}
\int\limits\limits_{(c_s)} \frac{W(s)}{s} w(s) X^{s/2} \widehat\Phi(1+\frac s2)\Big (\frac {\alpha^2 l}{\pi}\Big )^{s}\Gamma (s)(2^{1-2s}-1)\left(\cos + \sin \right)\Big( \frac {\pi s}{2} \Big)\Res\limits_{w=1/2}J_{1}(2s, w) \dif s.
\end{align}

  We note that  \cite[Lemma 6.1]{Young2} shows that
\begin{align*}
    \Gamma (s)(2^{1-2s}-1)\left(\cos + \sin \right)\Big( \frac {\pi s}{2}\Big)  \pi^{-s}=2 \Big( \frac {8}{\pi} \Big)^{-s}\frac {\Gamma (\frac {1/2-s}{2})}{\Gamma (\frac {1/2+s}{2})}\frac {\zeta^{(2)}(1-2s)}{\zeta(2s)}.
\end{align*}

  This together with \eqref{eq:Jk1pole} allow us to recast the expression in \eqref{residue-firstexp} as
\begin{align}
\label{residue}
    &   \frac{X}{ 2 \pi i} \sum_{\substack{a \leq Z \\ (a,2l)=1}} \frac{\mu(a)}{a^2}
\int\limits\limits_{(c_s)} \frac{W(s)}{s} w(s) X^{s/2} \widehat\Phi \Big(1+\frac s2 \Big)(\alpha^2 l)^{s} \Big( \frac {8}{\pi} \Big)^{-s}\frac {\Gamma (\frac {1/2-s}{2})}{\Gamma (\frac {1/2+s}{2})}\zeta^{(2)}(1-2s) L^{(2)}(1, f )K_{1}\Big(2s, \frac 12\Big)\prod_{p|al}I_{1,p} \Big(2s, \frac 12 \Big) \dif s.
\end{align}

   We now shift the line of integration above to $c_s=\varepsilon$, encountering no pole in the process in view of \eqref{Jgen}. By \eqref{eq:Jk1def}, we see that \eqref{residue} equals
\begin{align}
\label{residue1}
\begin{split}
    &    \frac{X}{ 2 \pi i} \sum_{\substack{(a,2l)=1}} \frac{\mu(a)}{a^2}
\int\limits\limits_{(\varepsilon)} \frac{W(s)}{s} w(s) X^{s/2} \widehat\Phi \Big(1+\frac s2 \Big)(\alpha^2 l)^{s} \Big( \frac {8}{\pi} \Big)^{-s} \frac {\Gamma (\frac {1/2-s}{2})}{\Gamma (\frac {1/2+s}{2})}\zeta^{(2)}(1-2s) L^{(2)}(1, f )K_{1} \Big( 2s, \frac 12 \Big)\prod_{p|al}I_{1,p} \Big(2s, \frac 12\Big) \dif s \\
& \hspace*{3in} +O(X^{1+\varepsilon}l^{\varepsilon}Z^{-1+\varepsilon}) \\
=& \frac{X}{2 \pi i}\int\limits\limits_{(\varepsilon )} \frac{W(s)}{s} w(s) X^{s/2} \widehat\Phi \Big(1+\frac s2 \Big)l^{s} \Big(\frac {8}{\pi}\Big)^{-s}\frac {\Gamma (\frac {1/2-s}{2})}{\Gamma (\frac {1/2+s}{2})}\zeta^{(2)}(1-2s) L^{(2)}(1, f )\mathcal F(2s;l) \dif s+O(X^{1+\varepsilon}l^{\varepsilon}Z^{-1+\varepsilon}),
\end{split}
\end{align}
with $\mathcal F(2s;l)$ defined in \eqref{Fvl}. \newline

   We further shift the line of integration on the right-hand side of \eqref{residue1} to $\Re(s)=-1/2+\varepsilon$ to pick up a double pole at $s=0$.  Note that \eqref{Edef} and \eqref{Fgenest} imply that
\begin{align*}
   \prod_{p|l}E_p(-1+2\varepsilon, \tfrac 12) \ll &  l^{1/2+\varepsilon}.
\end{align*}

  We apply the above together with the rapid decay of $w(s)$ on vertical lines given in \eqref{wsdecay} to see that the integral on the new line is
\begin{align*}
    \ll &  X^{-1/4+\varepsilon}l^{\varepsilon}.
\end{align*}

Consequently, the contribution of the integration on the new line to the expression in \eqref{residue} is
\begin{align}
\label{residueerror}
   \ll &     X^{3/4+\varepsilon}l^{\varepsilon}.
\end{align}

   We now compute the residue of the double pole at $s=0$  to see that it equals to
\begin{align*}
\begin{split}
  &  -\frac{X \widehat\Phi(1)}{4}L^{(2)}(1, f)\mathcal F(0;l)\Big ( \log X^{1/2}+\log l+C_2+2\frac {\mathcal F'(0;l)}{\mathcal F(0;l)}  \Big ),
\end{split}
\end{align*}
  where $C_2$ is a constant depending on $\Phi(1)$ and $\Phi'(1)$ only. \newline

  We next apply \eqref{Edef} to see that the above equals
\begin{align}
\label{residueexplicit1}
\begin{split}
  &  -\frac{X \widehat\Phi(1)}{4}L^{(2)}(1, f)\mathcal F(0;l)\Big ( \log X^{1/2}+\sum_{p \mid l_1}\log p+C_2-2\sum_{\substack{p|l_1 }}\frac {(\lambda_f(p)+1/p)\log p}{1+\lambda_f(p)+1/p} +\sum_{\substack{p|l }}\frac {D_2(p) \log p}{p} \Big ),
\end{split}
\end{align}
  where $D_2(p) \ll 1$ for all $p$.

\subsection{Completion of the proof}
  We deduce from \eqref{S3exp}, \eqref{Errknonzero}, \eqref{residue1}-\eqref{residueexplicit1} that
\begin{align*}
\begin{split}
  S_3(l) =   -\frac{X \widehat\Phi(1)}{4}L^{(2)}(1, f)\mathcal F(0;l) & \Big ( \log X^{1/2}+\sum_{p \mid l_1}\log p+C_2-2\sum_{\substack{p|l_1 }}\frac {(\lambda_f(p)+1/p)\log p}{1+\lambda_f(p)+1/p} +\sum_{\substack{p|l }}\frac {D_2(p) \log p}{p}\Big ) \\
& +O(X^{3/4+\varepsilon}l^{\varepsilon}+l^{1/2 + \varepsilon} Z X^{3/4 + \varepsilon}+X^{1+\varepsilon}l^{\varepsilon}Z^{-1}).
\end{split}
\end{align*}
  From the expression of $\mathcal F(0;l)$ in \eqref{sumovera}, we see that the above expression for $S_3(l)$ can be written as
\begin{align}
\label{residueexplicit3}
\begin{split}
  S_3(l)=   -\frac{X \widehat{\Phi}(1)}{4\zeta^{(2)}(2)\sqrt{l_1}} & \prod\limits_{p\mid l}\Big ( \frac{p}{p+1} \Big )L^{(2)}(1,\sym f)L^{(2)}(1, f)\mathcal H(1)\prod_{p|l}Q_p(1;l) \\
& \times \Big ( \log X^{1/2}+\sum_{p \mid l_1}\log p+C_2-2\sum_{\substack{p|l_1 }}\frac {(\lambda_f(p)+1/p)\log p}{1+\lambda_f(p)+1/p} +\sum_{\substack{p|l }}\frac {D_2(p) \log p}{p} \Big ) \\
& +O(X^{3/4+\varepsilon}l^{\varepsilon}+l^{1/2 + \varepsilon} Z X^{3/4 + \varepsilon}+X^{1+\varepsilon}l^{\varepsilon}Z^{-1}).
\end{split}
\end{align}

  We put together \eqref{S2}, \eqref{S10} and \eqref{residueexplicit3} to obtain that
\begin{align}
\label{Sexp0}
\begin{split}
 S(l) = \frac{X \widehat{\Phi}(1)}{4\zeta^{(2)}(2)\sqrt{l_1}} \prod\limits_{p\mid l}\Big ( \frac{p}{p+1} \Big ) & L^{(2)}(1,\sym f)L^{(2)}(1, f)\mathcal H(1)\prod_{p|l}Q_p(1;l) \\
& \times \Big (\log \frac {X}{l_1}-\sum_{p \mid l_1}\log p+C+ 2\sum_{\substack{p|l_1 }}\frac {\lambda_f(p)\log p}{1+\lambda_f(p)+1/p}+\sum_{\substack{p|l }}\frac {D(p) \log p}{p}  \Big ) \\
&+ O\left(  \frac{X(l_1X)^{\varepsilon}}{Z\sqrt{l_1}}+\frac {X^{5/8+\varepsilon}l^{\varepsilon}}{\sqrt{l_1}}+X^{3/4+\varepsilon}l^{\varepsilon}+l^{1/2 + \varepsilon} Z X^{3/4 + \varepsilon}+\frac{X^{1+\varepsilon}l^{\varepsilon}}{Z} \right).
\end{split}
\end{align}
 where we set $C=C_1-C_2$, $D(p)=D_1(p)-D_2(p)$ (resp. $-D_2(p)$) when $p|l_1$ (resp. $p|l, p \nmid l_1$). \newline

 We now get \eqref{eq:1stmoment} from \eqref{Sexp0} by setting $Z=X^{1/8}l^{-1/4}$, thus completing the proof of Theorem \ref{theo:1stmoment}.

\vspace*{.5cm}

\noindent{\bf Acknowledgments.}  P. G. is supported in part by NSFC grant 11871082 and L. Z. by the FRG grant PS43707 at the University of New South Wales.  The authors would also like to thank H. M. Bui and G. Maiti for their helpful comments and suggestions.

\bibliography{biblio}
\bibliographystyle{amsxport}

\end{document}